\newcommand{\comment}[1]{}
\newenvironment{vd}{\noindent\color{magenta} VD :  }{}
\newenvironment{AU}{\noindent\color{red} AU : }{}
\newenvironment{jl}{\noindent\color{blue} JL :  }{}
\newcommand{\refthm}[1]{Thm.~\ref{#1}}
\newcommand{\refcor}[1]{Cor.~\ref{#1}}
\newcommand{\reflem}[1]{Lem.~\ref{#1}}
\newcommand{\refprop}[1]{Prop.~\ref{#1}}
\newcommand{\refsec}[1]{Sect.~\ref{#1}}
\newcommand{\refeq}[1]{Eq.~\ref{#1}}
\newenvironment{proofof}[2]{\textit{Proof of #1}: {#2} 
\hfill \qed}
\newcommand{\set}[2]{\left\{#1\mathrel{\left|\vphantom{#1}\vphantom{#2}\right.}#2\right\}}
\newcommand{\oneset}[1]{\left\{\mathinner{#1}\right\}}
\newcommand{\abs}[1]{\left|\mathinner{#1}\right|}
\newcommand{\Abs}[1]{\left\Vert\mathinner{#1}\right\Vert}
\newcommand{\floor}[1]{\left\lfloor\mathinner{#1} \right\rfloor}
\newcommand{\gen}[1]{\left< \mathinner{#1} \right>}
\newcommand{\N}{\mathbb{N}}
\newcommand{\Z}{\mathbb{Z}}
\renewcommand{\phi}{\varphi}
\newcommand{\eps}{\varepsilon}
\newcommand{\e}{\eps} 
\newcommand{\del}{\delta}
\newcommand{\sig}{\sigma}
\newcommand\DD{\Delta}
\newcommand\GG{\Gamma}
\newcommand\LL{\Lambda}
\newcommand{\Oh}{\mathcal{O}}
\newcommand{\cL}{\mathcal{L}}
\newcommand{\cT}{\mathcal{T}}
\renewcommand{\cT}{\mathcal{T}}
\newcommand{\BS}{\mathrm{\bf{BS}}}
\newcommand{\baum}{Baums\-lag\xspace}
\newcommand{\BG}{G_{(1,2)}}
\newcommand\s{\mathop\mathrm{swap}}
\newcommand\sdpz{\Z[1/2] \rtimes \Z}
\newcommand{\IFF}{if and only if\xspace}
\newcommand{\homo}{homomorphism\xspace}
\newcommand{\Breduced}{Britton-re\-du\-ced\xspace}
\newcommand{\Breduction}{Britton re\-duc\-tion\xspace}
\newcommand{\WP}{Word Problem\xspace}
\newcommand\lds{,\ldots ,} 
\newcommand{\sse}{\subseteq}
\newcommand{\es}{\emptyset}
\newcommand{\sm}{\setminus}
\newcommand{\OS}{\oneset{-1,0,+1}}
\newcommand\ei[1]{{\emph{#1}\xspace}\index{#1}} 
\newcommand{\proc}[1]{\ensuremath{\text{\textsc{#1}}}}
\newcommand{\clone}{\proc{clone}\xspace}
\newcommand{\ET}{\proc{ExtendTree}\xspace}
\newcommand{\MT}{\proc{MakeTree}\xspace}
\newcommand{\tow}{\tau} 
\newcommand{\cupi}{\dot\cup} 
\newcommand{\PC}{power circuit\xspace}
\newcommand{\MDS}{main data structure\xspace}
\newcommand{\tm}{triple marking\xspace}
\newcommand{\ch}{\ensuremath{\mathrm{c}}}
\newcommand{\pot}{\mathop{\mathrm{pot}}}
\begin{document}
\title{Efficient algorithms for highly compressed data: 
{T}he Word Problem in {H}igman's group is in {P}}
\author{Volker Diekert\inst{1}, J{\"u}rn Laun\inst{1}, Alexander Ushakov\inst{2}}
\institute{FMI, Universit\"at Stuttgart,
Universit\"atsstr. 38, D-70569 Stuttgart, Germany\and 
Department of Mathematics, Stevens Institute of Technology, Hoboken,  NJ 07030, USA}

\date{\today}

\maketitle
\begin{abstract}
Power circuits are data structures 
which support efficient algorithms for highly compressed integers. 
Using this new data structure  it has been shown recently by   Myasnikov, Ushakov and Won that the 
Word Problem of the one-relator \baum group is in P. Before that
the best known upper bound has been non-elementary. 
In the present  paper we provide new results for power circuits and 
we give new applications in algorithmic algebra and algorithmic group theory:
1.~We define a modified reduction procedure on power circuits which 
runs in quadratic time thereby improving the known cubic time complexity. The improvement is crucial for our other results. 
2.~We improve the complexity of the Word Problem for the \baum group
to cubic time thereby providing the first practical algorithm for that problem. 
3.~The main result is that the Word Problem of Higman's group is decidable in 
polynomial time. The situation for  Higman's group is more complicated than for the \baum group and forced us to advance the theory of \PC{}s. 
\end{abstract}
{\small {\bf Key words:}
Data structures, Compression, Algorithmic group theory, Word Problem

\section*{Introduction}\label{intro}

\emph{Power circuits} have been introduced in \cite{muw11pc}. It is a data structure for integers which supports
+, $-$, $\leq$, a restricted version of multiplication, and raising to the power of 2. 
Thus, by iteration it is possible to represent (huge) values involving the tower function by very small circuits.
Another way to say this is that efficient algorithms for \PC{}s yield efficient algorithms for arithmetic with integers in highly compressed form. This idea 
of \ei{efficient algorithms for highly compressed data} is the main underlying theme of the present paper. In this sense our paper is more about compression and
data structures than about algorithmic group theory. However, the applications are in this area so far. 

Indeed as a first application of \PC{}s, \cite{muw11bg} showed that the \WP of the \baum group\footnote{Sometimes called 
\baum-Gersten group, e.g.{} in \cite{plat04} or in a preliminary version of \cite{muw11bg}.}  is solvable in polynomial time. 
Algorithmic interests have a long history in 
combinatorial group theory. In 1910 Max Dehn \cite{dehn11} formulated fundamental 
algorithmic problems for (finitely presented)  groups. The most prominent one is the 
\ei{\WP:} "Given a finite presentation of some fixed group $G$, decide whether an input word 
$w$ represents the trivial element $1_G$ in $G$." 
It took more than four decades until Novikov and Boone showed (independently) in the 1950's the existence
of a fixed finitely presented group with an undecidable \WP, \cite{nov55,boone59}. 
It is also true that there are finitely presented groups with a decidable \WP but
with arbitrarily high complexity, \cite[Theorem 1.3]{brs02}. 
In these examples the difficult instances for the \WP are extremely sparse,  
(because they encode Turing machine computations) and, 
inherently due to the constructions, these groups never appear in any natural setting. 

In contrast, the \baum group $\BG$  is given by a single 
defining relation, see \refsec{wpg}. (It is a non-cyclic one-relator group all of whose finite
factor groups are cyclic \cite{baumslag69}.) 
 It has been a natural (and simplest) candidate for a group with a non-polynomial  \WP in the worst case, because the Dehn function\footnote{We do not use any result about Dehn functions here, and we refer the interested reader e.g.{} to Wikipedia (or to the appendix) for a formal definition.} of $\BG$ is 
non-elementary by a result due to Gersten
\cite{gersten91}, see also \cite{plat04}.
Moreover,
the only general way to solve the word problem in one-relator groups is by a Magnus break-down procedure \cite{mag32,LS01} which computes normal forms. 
It was developed in the 1930s and there is
no progress ever since.
Its time-complexity on $\BG$ is non-elementary, since it cannot be bounded by any tower of exponents.

So, the question of algorithmic hardness of the \WP in one-relator groups is still wide open.
Some researchers conjecture it is polynomial (even quadratic, see
\cite{BMS}), based on observations
on generic-case complexity \cite{KMSS1}. Others conjecture that it cannot be
polynomial, based on the fast growing Dehn functions. 
(Note that the Dehn function gives a lot information about the group.
E.g., if it is linear, then the group is hyperbolic, and
the \WP is linear. If it is computable, then the \WP is decidable \cite{MadlenerO88}.)

The contributions of the present paper are as follows: 
In a first part, we give new efficient manipulations of the data structure of \emph{\PC{s}}. 
Concretely, we define a new reduction procedure called \ET on power circuits. It improves the complexity 
of the reduction algorithm from cubic to quadratic time. This is our first result. It turns out to be essential,
because reduction is a fundamental tool and applied as a black-box operation
frequently. For example, with the help of a better reduction 
algorithm (and some other ideas) we can improve as our second result the complexity of the \WP in 
$\BG$ significantly from $\Oh(n^7)$ in \cite{muw11bg} down to $\Oh(n^3)$, 
\refthm{wpbg}. This cubic algorithm is the first practical algorithm 
which works for that problem on all reasonably short instances. 

The basic structure in our paper is the domain of rational numbers, where nominators are restricted to powers of two. 
Thus, we are working  in the ring $\Z[1/2]$. We view $\Z[1/2]$ as an Abelian 
group where multiplication with $1/2$ is an automorphism. This in turn can be embedded into a 
semi-direct product $\sdpz$ which is the set of pairs $(r,k) \in \Z[1/2] \times \Z$ 
with the multiplication $(r,k)(s,\ell) = (r + 2^k s, k+ \ell)$. There is a natural 
partially defined swap operation which interchanges the first and second component.
Semi-direct products (or more generally, wreath products) appear in various places as basic mathematical objects, and this makes  
the algebra  $\sdpz$ with swapping interesting in its own right. 
This algebra has a \WP in $\Oh(n^4)$ (\refthm{wpsdpz}). The \WP of $\BG$ can be understood as a special case with the better $\Oh(n^3)$ performance. 

Another new application of \PC{s} shows that the \WP in Higman's group $H_4$ is decidable 
in polynomial time. (We could also consider any $H_q$ with  $q \geq 4$.) This is our third and main result.  
Higman \cite{higman51} constructed $H_4$ in 1951 as the first example 
of a finitely presented 
 infinite group where all finite quotient groups are trivial.
This leads immediately to a finitely generated infinite simple quotient group of $H_4$;
and no such group was known before Higman's construction. The group 
$H_4$ is constructed by a few operations involving amalgamation (see e.g. \cite{serre80}). 
Hence, a Magnus break-down procedure (for amalgamated products)  yields decidability of the
\WP. The procedure computes normal forms, but the length of  normal forms 
can be a tower function in the input length. (More accurately, one can show that
the Dehn function of $H_4$ has an order of magnitude as a tower function \cite{bridson10}.) 
Thus, Higman's group has been another  natural, but rather complicated candidate for a finitely presented group with an extremely hard 
\WP. 
Our paper eliminates $H_4$ as a candidate: We show  that the \WP of $H_4$ is in $\Oh(n^6)$ (\refthm{wph}).

We  obtain this result by new techniques for efficient manipulations of  multiple markings 
in a single \PC and their ability for huge compression rates. Compression 
techniques have been applied elsewhere for solving word problems, \cite{Lohrey06siam,LohreyS07,schleimer08,HauboldL09}. But in these papers the authors 
use straight-line programs whose compression rates are far too small (at best exponential)
to cope with \baum or Higman groups. 

Due to lack of space some few proofs are shifted to the appendix. 
 
\section{Notation and preliminaries}\label{notation}
Algorithms and (decision) problems are classified by their \ei{time complexity}
on a random-access machine (RAM). Frequently we use the notion of \ei{amortized
analysis} with respect to a \ei{potential function}, see e.g. in \cite[Sect. 17.3]{CLRS09}. 

The \ei{tower function} $\tow:\N \to 2^\N$
is defined as usual: $\tow(0) = 1$ and $\tow(i+1) = 2^{\tow(i)}$ for $i \geq 0$. Thus, 
e.g. $\tow(4) = 2^{2^{2^{2^{1}}}}= 2^{16}$ and $\tow(6)$ written in binary requires more 
bits than there are supposed to be electrons in this universe. 

We use standard notation and facts from group theory as the reader can find in the 
classical text book \cite{LS01}. In particular, we apply the standard (so called
Magnus break-down) procedure for solving the word problem in HNN-extensions and
amalgamated products. All HNN-extensions 
and amalgamated products in this paper have an explicit finite presentation. 
Amalgamated products are denoted by $G*_A H$ where $A$ is a subgroup in $G$ and in 
$H$. The formal definition of $G*_A H$ creates first a disjoint copy $H'$ of 
$H$. Then one considers the free product $G* H$ and adds defining relations
identifying $a \in A$ with its copy $a' \in A'$. We refer to \cite{serre80} for 
the basic facts about Higman groups.

\section{Power circuits}\label{PCs}

This section is based on \cite{muw11pc}, but we also 
provide new material like our treatment of multiple markings
and improved time complexities.\footnote{In order to keep the paper 
self-contained and as 
we use a slightly different notation we give full proofs in the appendix.} 
Let $\GG$ be a set and $\del$ be a mapping $\del: \GG \times \GG\to \OS$. This defines
a directed graph $(\GG, \DD)$, where $\GG$ is the set of vertices and the set of directed arcs (or edges) is 
$\DD=\set{(P,Q)\in \GG \times \GG}{\del(P,Q)\neq 0}$ (the support of the mapping $\del$). 
Throughout we assume that $(\GG, \DD)$ is a \ei{dag} (\ei{directed acyclic graph}). 
In particular, $\del(P,P)=0$ for all vertices $P$. 

A \ei{marking} is a mapping $M:\GG\to\OS$. We can also think of a marking as a
subset of $\GG$ where each element in $M$ has a sign ($+$ or $-$). 
(Thus, we also speak about a \ei{signed subset}.)  
Each node $P\in \GG$ is associated in a natural way with a marking, which is called 
its $\LL$-marking $\LL_P$ and which is defined as follows: 
$$\LL_P: \GG\to \OS, \; Q \mapsto \del(P,Q)$$
Thus, the marking $\LL_P$ is the signed subset 
which corresponds to the 
targets of outgoing arcs from $P$. 

We define the \ei{evaluation} $\e(P)$ of a node ($\e(M)$ of a marking resp.)
bottom-up in the dag by induction:
\begin{align*}
\e(P) &= 2^{\e(\LL_P)} &\text{for a node $P$}, \\
\e(M) &= \sum_{P}M(P)\e(P) &\text{for a marking  $M$}.
\end{align*}
Note that leaves evaluate to $1$, the evaluation of a marking is a real number, and the 
evaluation of a node $P$ is a positive real number. Thus, $\e(P)$ and $\e(M)$
are well-defined. We have the following nice formula for nodes: 
$ \log_2(\e(P)) = \e(\LL_P)$.
Therefore we can view the marking $\LL_P$ as "taking the logarithm of $P$". 

\begin{definition}\label{def:PC}
A \ei{\PC} is a pair $\Pi=(\GG,\del)$ with $\del: \GG \times \GG\to \OS$ such that $(\GG, \DD)$ is a dag as above with the additional property 
that $\e(M)\in\Z$ for all markings $M$.
\end{definition}

We will see later in \refcor{PCtest} that it is possible to check in quadratic
time whether or not a  dag $(\GG, \DD)$ is 
a \PC. (One checks $\e(\LL_P)\geq 0$ for all nodes $P$,
\reflem{lem:integervalues} in the appendix.) 

\begin{example}\label{binarybasis}
We can represent every integer in the range 
$[-n,n]$ as the evaluation of some marking in a \PC with node set $\oneset{P_0 \lds P_\ell}$ such that $\e(P_i) =2^{i}$ for $0 \leq i \leq \ell$ and  $\ell = \floor{\log_2 n}$.
Thus, we can convert the binary notation of an integer $n$ into a \PC
with $\Oh(\log \abs n)$ vertices and $\Oh(((\log \abs n)\log\log \abs n)$ arcs.
\end{example}

\begin{example}\label{powtow}
A \PC can realize tower functions, since a line of $n+1$ nodes allows to represent
$\tow(n)$ as the evaluation of the last node. 
\end{example}

Sometimes it is convenient to think of a marking $M$ as a formal sum
$M = \sum_{P} M(P)P$. In particular, $-M$ denotes a marking with
$\e(-M) = -\e(M)$. 
For a marking $M$ we denote by $\sig(M)$ its \ei{support}, i.e., 
$$\sig(M)= \set{P\in \GG}{M(P)\neq 0}\sse \GG.$$ 

We say that $M$ is \ei{compact}, if we have 
$\e(P) \neq \e(Q)\neq 2 \e(P)$ for all $P,Q \in \sig(M)$, $P\neq Q$. 
If $M$ is compact, then we have $\e(M)=0$ \IFF $\sig(M)=\es$, and we have $\e(M)>0$ \IFF $M(P)$ is positive for the node
$P$ having the maximal value in $\sig(M)$. 
 
The insertion of a new node $\clone(P)$ without incoming arcs and with $\LL_{\clone(P)}=\LL_P$
is called \ei{cloning of a node} $P$. 
It is extended to markings, where $\clone(M)$ is obtained
by cloning all nodes in $\sig(M)$ and defining $M(\clone(P))=M(P)$ for $P\in\sig(M)$
and $M(\clone(P))=0$ otherwise.
We say that $M$ is a \ei{source}, if no node in $\sig(M)$ has any incoming arcs.
Note that $\clone(M)$ is always a source. 

If $M = \sum_{P} M(P)P$ and $K = \sum_{P} K(P)P$ are markings, 
then $M +K = \sum_{P} (M(P)+K(P))P$ is a formal sum where
coefficients $-2$ and $2$ may appear. For $M(P)+K(P)= \pm 2$,
let $P'=\clone(P)$.
We define a marking $(M+K)'$ by putting  $(M+K)'(P)= (M+K)'(P')= \pm 1$.
In this way we can realize addition (and subtraction) 
in a \PC by cloning at most $\abs{\sig(M) \cap \sig(K)}$ nodes.

Next, consider markings $U$ and $X$ with $\e(U)=u$ and $\e(X)= x$
such that $u2^x \in \Z$ (e.g. due to $x \geq 0$). We obtain a marking $V$ with $\e(V)=u2^x$
and $\abs{\sig(V)}= \abs{\sig(U)}$ as follows. First, let
$V=\clone(U)$ and $X'=\clone(X)$. Next, introduce additional arcs 
between all  $P'\in\sig(V)$ and $Q'\in X'$ 
with $\delta(P',Q')=X'(Q')$. 
Note that the cloning of $X$ avoids double arcs from $V$ to $X$. The cloning
of $U$ is not necessary, if $U$ happens to be a source. 

We now introduce an alternative representation for {\PC}s which allows us to
compare markings efficiently. 
The process of tranforming a $\Pi$ into this so-called tree representation is referred to
as \ei{reduction of a \PC}. 

\begin{definition}\label{def:treerep}
A \ei{tree representation} of a \PC $\Pi=(\GG,\del)$ consists of 
\begin{enumerate}[i)]
\item $\GG$ as a list $[P_1\lds P_n]$ such that $\e(P_i)<\e(P_{i+1})$ for all $1\le i<n$,
\item a bit vector $b(1)\lds b(n-1)$ where $b(i)=1$ \IFF $2\e(P_i)=\e(P_{i+1})$, and
\item\label{def:treerep:tree} a ternary tree of height $n$, where each node
has at most three outgoing edges, labeled by $+1$, $0$, and $-1$. 
All leaves are at level $n$, and each leaf represents the marking $M:\GG\to\OS$ given
by the labels of the unique path of length $n$ from the leaf to the root of the tree. These markings must be compact. 
Furthermore, all markings $\LL_P$ for $P\in\GG$ are represented by leaves. 
Finally, for each level in the tree $T$, we keep a list of the nodes in that level. 
\end{enumerate}
\end{definition}

If a path from some leaf to the root is labeled $(0,+1,-1,0,+1)$, then that leaf represents
the marking $P_2-P_3+P_5$ and we know (due to compactness) $\e(P_2)<2\e(P_3)<4\e(P_5)$.
The amount of memory for storing a tree representation is bounded by $\Oh(\abs{\GG}\cdot(\text{number of leaves}))$. 
Part \ref{def:treerep:tree}) of the definition is only needed inside the procedure \ET,
which is explained in the appendix. For simplicity the reader might want to ignore it in
a first reading and just think of a tree representation as the \PC graph plus an ordering
of the nodes and a bit vector keeping track of doubles.

\begin{proposition}[\cite{muw11pc}]\label{testintree}
There is a $\Oh(\abs{\GG})$ time algorithm which on input
a tree representation $\Delta$ of a \PC and two markings $K$ and $M$ (given as leaves)
compares $\e(K)$ and $\e(M)$. It outputs whether the two values are equal and if not,
which one of them is larger. In the latter case it also tells whether their difference
is $1$ or $\ge 2$. 
\end{proposition}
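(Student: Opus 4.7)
The plan is to reduce the comparison of $\e(K)$ and $\e(M)$ to reading off a normalized signed-digit form of the difference $D:=K-M$, produced by one linear sweep.

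First, extract the coefficient vectors $(K(P_i))_{i=1}^n$ and $(M(P_i))_{i=1}^n$, where $n=|\GG|$: each of $K$ and $M$ is a leaf in a ternary tree of height $n$, so its coefficients are the edge labels along the unique root-to-leaf path and are read in $\Oh(n)$ time. Next, initialize $D(P_i):=K(P_i)-M(P_i)\in\{-2,-1,0,1,2\}$ and perform a single left-to-right sweep with carry $cy\in\OS$: at position $i$ form $c:=(K(P_i)-M(P_i))+cy$; if $|c|\le 1$ store $D(P_i):=c$ and reset $cy:=0$; if $|c|=2$ and $b(i)=1$ store $D(P_i):=0$ and push $cy:=c/2$; if $|c|=2$ and $b(i)=0$ store $D(P_i):=c$ and reset $cy:=0$. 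An inductive argument using the individual compactness of $K$ and $M$ shows that $|c|\le 2$ is maintained throughout: whenever $|c|=2$ is produced, at least one of $K(P_{i-1}), M(P_{i-1})$ is nonzero with $b(i-1)=1$, so compactness of that marking forces its $P_i$-entry to vanish, bounding the next value of $|c|$ by $1+|cy|\le 2$. A constant-window cleanup then rewrites each surviving non-compact opposite-sign pair $D(P_i)=+1,\,D(P_{i+1})=-1$ with $b(i)=1$ as $(-1,0)$ (and symmetrically); equal-sign pairs with $b(i)=1$ may be kept, since they contribute the \emph{safe} value $\pm 3\e(P_i)$.

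Finally, read off the answer. If $\sig(D)=\emptyset$, then $\e(K)=\e(M)$. Otherwise let $P_j$ be the node of largest value in $\sig(D)$. Since all $\e(P_i)$ are distinct positive powers of $2$ and the normalized form obeys the invariants above, a geometric-series bound, together with the estimate $3\e(P_i)<\e(P_{i+2})$ handling any leftover equal-sign pair, shows the sum of the lower contributions to be strictly smaller in absolute value than $|D(P_j)|\cdot\e(P_j)$; hence $\mathrm{sign}(\e(K)-\e(M))=\mathrm{sign}(D(P_j))$. For the $1$-versus-$\ge 2$ test, $|\e(K)-\e(M)|=1$ holds exactly when $\sig(D)=\{P_j\}$ with $\e(P_j)=1$ and $|D(P_j)|=1$; in every other case the same bound yields $|\e(K)-\e(M)|\ge 2$.

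The main obstacle is the carry invariant in the sweep: showing that the compactness of $K$ and $M$, together with the bit vector $b$, is enough to keep $|c|\le 2$ throughout and to bound the cleanup to a constant window size. Once this combinatorial lemma is established, each of the three stages makes one linear pass, yielding the total $\Oh(|\GG|)$ bound.
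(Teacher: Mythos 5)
Your proposal is correct, but it takes a genuinely different route from the paper. The paper exploits the tree structure directly: since $K$ and $M$ are leaves of the same ternary tree and the edge labels read from the root downwards give the coefficients of $P_n,P_{n-1},\ldots,P_1$, i.e.\ most significant node first, one simply walks down the two root-to-leaf paths in parallel; because each marking is individually compact, the first position where the labels differ already decides which value is larger, and the test $\e(K)=\e(M)+1$ follows from the same dominance estimate. You instead form the signed-digit difference $D=K-M$, normalize it by a carry sweep (your key lemma, that compactness of $K$ and $M$ together with the bit vector keeps $\abs{c}\le 2$, is exactly the right point and your argument for it is sound), and then read off sign, equality, and the $1$-versus-$\ge 2$ information from the leading nonzero digit of the normalized form; the dominance bound $\sum_{i<j}\abs{D(P_i)}\e(P_i)<\e(P_j)$ indeed holds under the invariants your sweep guarantees. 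Both methods run in $\Oh(\abs\GG)$. What your route buys is that it does not need $K$ and $M$ to be stored as leaves of a common tree (only the sorted node list, the bit vector and the two coefficient vectors), and it makes the ``difference equals $1$'' criterion completely explicit, whereas the paper's route is shorter because the tree already stores the compact digit strings most-significant-first. One caveat on your write-up: the cleanup is not literally constant-window, since rewriting $(+1,-1)\mapsto(-1,0)$ at positions $(i,i+1)$ can create a new opposite-sign pair at $(i-1,i)$ (e.g.\ for $K=P_1+P_3-P_5$, $M=-P_2-P_4$ on a chain the raw digits are $(+1,+1,+1,+1,-1)$ although the difference is $-1$), so the rewrites cascade; doing the cleanup in a single pass from the largest node downwards handles the cascade within the linear time bound, and your difference-$1$ criterion is valid precisely because after such an exhaustive pass no opposite-sign adjacent pair with $b(i)=1$ survives.
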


\begin{proof}
Start at the root of the tree and go down the paths corresponding to $K$ and $M$ in parallel. 
The first pair of different labels on these paths determines the larger value. 
The check whether $\e(K)=\e(M)+1$ is equally easy due to compactness. 
\qed\end{proof}

\begin{definition}\label{def:chain}
Let $\Pi=(\GG,\del)$ be a \PC. 
A chain (of length $r$) in $\Pi$ is a sequence of nodes $(P_0,P_1\lds P_r)$
where $\e(P_i)=2^i\e(P_0)$ ($0\le i\le r$). A chain is maximal if it is not part of a
longer chain. The number of maximal chains in $\Pi$ is denoted $\ch(\Pi)$.
We define the \ei{potential} of $\Pi$ to be $\pot(\Pi)=\ch(\Pi)\cdot\abs{\GG}$. 
\end{definition}

The following statement uses amortized time w.r.t.{} the potential function
$\pot(\Pi)$. Note that the potential $\pot(\Pi)$ remains bounded by $\abs{\GG}^2$ since $\ch(\Pi)\le\abs{\GG}$.

\begin{theorem}\label{extendtree}
The following procedure $\ET$ runs in amortized time
$\Oh((\abs{\GG}+\abs{U})\cdot\abs{U})$: 

Input: A dag $\Pi=(\GG\cupi U,\del)$, where $\GG$ and $U$ are disjoint 
with no arcs pointing from $\GG$ to $U$ and such that 
$(\GG,\del\vert_{\GG\times\GG})$ is a \PC in tree representation.
The potential is defined by the potential of its \PC-part $\pot((\GG,\del\vert_{\GG\times\GG})).$
The output of the procedure is "no", if $\Pi$ is not a \PC
(because $\e(P)\not\in\Z$ for some node $P$). 
In the other case, the output is a tree representation 
of a \PC $\Pi'=(\GG',\del')$ where:
\begin{enumerate}[i)]
\item\label{et:sub} $\GG \sse \GG'$ and $\del\vert_{\GG\times\GG}= \del'\vert_{\GG\times\GG}$.
\item\label{et:size} $\abs{\GG'}\le\abs{\GG}+3\abs{U}+(\ch(\Pi)-\ch(\Pi'))$
\item\label{et:map} For all $Q\in U$ there exists a node $Q'\in\GG'$ with $\e(Q)=\e(Q')$. 
\item\label{et:mark} For every marking $M$ in $\Pi$ there
exists a marking $M'$ in $\Pi'$ with $\e(M')=\e(M)$ and $\abs{\sig(M')}\leq\abs{\sig(M)}$.
\end{enumerate}
For $\sig(M)\sse\GG$ we can choose $M=M'$ by \ref{et:sub}).
If some further markings $M_1,M_2\lds M_m$ are part of the input
(those where $\sig(M)\cap U\neq \es$), we need additional amortized time
$\Oh(\abs{\sig(M_1)}+\cdots+\abs{\sig(M_m)}+m\cdot(\abs{\GG}+\abs{U}))$ to find the corresponding $M_1',M_2'\lds M_m'$. 
\end{theorem}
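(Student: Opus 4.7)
The idea is to process the nodes of $U$ one at a time in an order that handles every out-neighbour of a node before the node itself. Such an order exists because $\Pi$ is acyclic, and the assumption that no arc leads from $\GG$ to $U$ ensures that the already-reduced part $\GG$ sits below $U$. We maintain the invariant that after the $i$-th step we have a partial result $\Pi_i' = (\GG_i',\del_i')$ in tree representation, where $\GG \sse \GG_i'$, $\del_i'$ extends $\del\vert_{\GG \times \GG}$, and every previously processed node of $U$ has been identified with a unique node of $\GG_i'$ of equal evaluation. Initially $\Pi_0' = (\GG,\del\vert_{\GG \times \GG})$ and we set $\Pi' = \Pi_{\abs U}'$.

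The step that processes $Q \in U$ proceeds as follows. First, rewrite $\LL_Q$ as a signed subset of $\GG_i'$ by replacing every target of an arc out of $Q$ by the node of $\GG_i'$ that represents it. Then traverse the sorted node list of $\GG_i'$ once to compactify $\LL_Q$: using the bit vector we detect adjacent chain pairs $(P,P')$ with $\e(P') = 2\e(P)$ and propagate binary carries of the forms $\pm 2P = \pm P'$ and $-P + P' = P$ upwards along the chain, cloning only when a carry reaches a node that has no successor in the chain. The resulting marking is compact and is compared to the empty marking by \refprop{testintree} to decide whether $\e(\LL_Q) \ge 0$; if it is negative, the procedure returns "no", which is correct by \reflem{lem:integervalues}. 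Otherwise we use \refprop{testintree} against the leaves already stored for $\{\LL_R : R \in \GG_i'\}$ to see whether some existing $R$ satisfies $\e(\LL_R) = \e(\LL_Q)$; if so, $Q$ is identified with $R$, otherwise a fresh node is inserted at the correct position in the sorted list and the bit vector is updated.

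Maintaining the ternary tree is then bookkeeping: one new level is appended when a fresh node is created, every existing leaf is extended by a $0$-edge, the new leaf for $\LL_Q$ is added, and the leaves representing the additional input markings $M_1 \lds M_m$ are extended by an edge labelled $M_j(Q)$ if $Q \in \sig(M_j)$ and $0$ otherwise; the per-level node lists make these updates run in time proportional to the number of leaves plus $m$. Property \ref{et:map}) comes from the identification step, \ref{et:sub}) is immediate from the construction, and \ref{et:mark}) is obtained by applying the same compactification walk to each input marking $M_j$ at per-marking cost $\Oh(\abs{\sig(M_j)} + \abs{\GG} + \abs{U})$, which sums to the stated additional time bound.

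The main obstacle is to simultaneously control the size bound \ref{et:size}) and the amortized time $\Oh((\abs{\GG}+\abs{U})\cdot\abs{U})$. Each of the $\abs{U}$ steps walks through $\GG_i'$ a constant number of times, so the gross per-step cost is $\Oh(\abs{\GG}+\abs{U})$. The compactification of $\LL_Q$ may however create clones at the tops of chains: each such clone adds one node but only extends the chain, whereas each carry that bridges two previously disjoint chains decreases $\ch(\Pi_i')$ by one. A careful accounting then shows that at most three new nodes are created for each $Q \in U$ unless chains are being consolidated, which is precisely the additive term $\ch(\Pi) - \ch(\Pi')$ in \ref{et:size}); any extra real time spent on long chain propagations is paid for by the drop of the potential $\pot = \ch \cdot \abs{\GG}$, preserving the amortized bound.
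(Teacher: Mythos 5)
Your overall skeleton matches the paper's: process $U$ in topological order, for each $Q$ either identify it with an existing node of equal value or insert a fresh one, resolve carries along chains using the ordering and the bit vector, and amortize against the potential $\ch\cdot\abs{\GG}$. The genuine gap is in the sentence ``a careful accounting then shows that at most three new nodes are created for each $Q\in U$ unless chains are being consolidated'': with your \emph{lazy} policy of ``cloning only when a carry reaches a node that has no successor in the chain'', this accounting is exactly the delicate point and it is not established. Concretely, a single node $Q$ whose (substituted) marking $\LL_Q$ marks, say, the two top nodes of each of $k$ distinct maximal chains forces, during your one compactification walk, one new top-of-chain node per such chain --- $k$ new nodes in a single step, and none of these creations merges chains, so the term $\ch(\Pi)-\ch(\Pi')$ does not pay for them. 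The paper avoids tying node creation to carry propagation altogether: it keeps \emph{every} pending marking ($\LL_{Q'}$ for unprocessed $Q'$ and the $M_j$) compact at all times, maintains the invariant that from every marked node there is a chain ending in an unmarked node that cannot be prolonged (so carries always terminate at \emph{existing} nodes, cf.\ \reflem{lem:compactify}), and re-establishes this invariant by creating a fixed number of ``room'' nodes per moved node (the joker of \reflem{lem:createjoker}, the chain prolongation $P'$, and the top nodes $T$, the latter charged to chain merges). Carry work on the markings is then charged to the decrease of $\abs{\sig(M)}$, which is also what yields the additive $\Oh(\abs{\sig(M_1)}+\cdots+\abs{\sig(M_m)}+m\cdot(\abs{\GG}+\abs{U}))$ bound. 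Your proposal has no counterpart to this mechanism, so both property ii) and the amortized time bound are unproven as stated.

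A second, related problem is your treatment of the extra markings $M_j$. You say the leaves for the $M_j$ are maintained in the ternary tree and extended by an edge labelled $M_j(Q)$ whenever a fresh node is created, but while $\sig(M_j)\cap U\neq\es$ these markings are in general not compact and cannot be leaves; moreover, when $Q$ is identified with an \emph{existing} node $R$, a marking with $Q\in\sig(M_j)$ may acquire coefficient $\pm2$ at $R$ (or cancel there), which your leaf-extension bookkeeping does not capture. Deferring everything to one final ``compactification walk'' per $M_j$ is also not immediate, because after substituting all $U$-nodes by their representatives the coefficients on $\GG'$ can have absolute value up to $\abs{\sig(M_j)}$, and compacting such a vector may again demand new chain-top nodes that your size and time accounting ignores. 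Finally, the value-lookup ``compare $\e(\LL_Q)$ against the stored leaves via \refprop{testintree}'' must be implemented as a positional lookup in the ordered leaf list (as in \reflem{lem:insertnode}); comparing against all leaves would already cost $\Oh(\abs{\GG}^2)$ per step and break the claimed $\Oh((\abs{\GG}+\abs{U})\cdot\abs{U})$ bound.
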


\begin{corollary}\label{maketree}
There is a $\Oh(\abs{\GG}^2)$ time procedure \MT that given a (graph representation) of
a \PC $\Pi=(\GG,\del)$ computes a tree representation. The number of nodes at most triples. 
\end{corollary}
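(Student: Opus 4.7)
The plan is to obtain \MT as a single application of the \ET procedure from Theorem~\ref{extendtree}, taking the empty \PC as starting point. Concretely, I would initialize with $\GG_{\text{start}} = \es$, viewed as the trivial \PC in tree representation (empty node list, empty bit vector, empty ternary tree), and then invoke \ET on the dag whose new-node set is $U = \GG$ (the input node set) with arc map $\del$. The precondition ``no arcs from $\GG_{\text{start}}$ to $U$'' is vacuous because $\GG_{\text{start}} = \es$, and $(\es, \del\vert_{\es\times\es})$ is trivially a \PC in tree representation, so all hypotheses of Theorem~\ref{extendtree} are met.

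Because the input is a \PC by assumption, \ET does not return ``no'' and produces a tree representation of some \PC $\Pi' = (\GG', \del')$. Property~\ref{et:map} provides, for every $Q \in \GG$, a node $Q' \in \GG'$ with $\e(Q) = \e(Q')$, so $\Pi'$ realizes the same set of values as $\Pi$; this is exactly what the conclusion asks for. For the size bound, specializing Property~\ref{et:size} to $\GG_{\text{start}} = \es$ gives
\[
\abs{\GG'} \;\le\; 0 \,+\, 3\abs{U} \,+\, \bigl(\ch(\Pi_{\text{start}}) - \ch(\Pi')\bigr) \;=\; 3\abs{\GG} - \ch(\Pi') \;\le\; 3\abs{\GG},
\]
which is precisely the claim that the number of nodes at most triples.

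For the time complexity, Theorem~\ref{extendtree} guarantees amortized running time $\Oh((\abs{\GG_{\text{start}}} + \abs{U})\cdot\abs{U}) = \Oh(\abs{\GG}^2)$ with respect to the potential $\pot$. To convert this into a worst-case bound, I would note $\pot(\Pi_{\text{start}}) = \ch(\es)\cdot\abs{\es} = 0$, so the actual cost is bounded by the amortized cost plus the initial potential minus the (nonnegative) final potential, giving actual running time $\Oh(\abs{\GG}^2)$ as well.

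The only genuine obstacle here is the amortized-to-worst-case step: one must ensure that Theorem~\ref{extendtree} is applied in a configuration where the starting potential is zero, so that the amortized bound really is a true runtime bound rather than an averaged one over a longer sequence. Feeding all of $\GG$ as the set $U$ in one \ET call makes this automatic; alternatively one could add the nodes of $\GG$ one at a time in topological order, but that variant complicates the analysis without improving the constant, and Theorem~\ref{extendtree} was stated in exactly the form that supports the single-shot approach.
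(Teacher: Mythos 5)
Your proposal is correct and is exactly the paper's argument: the paper proves the corollary in one line as the special case of Theorem~\ref{extendtree} with empty old part and $U$ equal to the whole node set, which is your single-shot call of \ET. The details you supply (vacuous precondition, the size bound from property~\ref{et:size}, and the amortized-to-worst-case conversion via zero initial potential) are just the bookkeeping the paper leaves implicit, and they check out.
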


\begin{proof}
This is a special case of \refthm{extendtree} when $U=\Pi$.
\qed\end{proof}

\begin{corollary}\label{PCtest}
The test whether a dag $(\GG,\del)$ defines a \PC 
can be done in $\Oh(\abs{\GG}^2)$. 
\end{corollary}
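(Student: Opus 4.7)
My plan is to reduce the statement directly to \refcor{maketree}, using the fact that the underlying procedure \ET is already designed to detect non-\PC inputs. First, I would appeal to \reflem{lem:integervalues} (stated in the appendix) which replaces the global property defining a \PC, namely $\e(M)\in\Z$ for every marking $M$, by the local, node-by-node condition $\e(\LL_P)\ge 0$. This is precisely the condition whose failure is caught internally by \ET while processing a new node.

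With this in hand, the algorithm is straightforward: on input dag $(\GG,\del)$, run \ET on the dag $\Pi=(\es\cupi\GG,\del)$, i.e., invoke \MT as in \refcor{maketree}. According to the specification in \refthm{extendtree}, the call outputs "no" exactly when some $\e(P)\not\in\Z$, and thus exactly when $(\GG,\del)$ fails to be a \PC. Declaring $(\GG,\del)$ a \PC \IFF the call does not return "no" therefore gives the required decision procedure, and correctness is immediate from that specification.

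For the running time, \refcor{maketree} already gives an $\Oh(\abs{\GG}^2)$ bound for \MT, which is exactly the bound we need. The small point that requires a moment of attention is that the amortized analysis of \refthm{extendtree} actually translates into an honest worst-case bound here: since the \PC-part at the start is empty, the initial potential $\pot$ is $0$, so the amortized estimate $\Oh((\abs{\GG}+\abs{U})\cdot\abs{U})$ becomes a genuine $\Oh(\abs{\GG}^2)$ upper bound on the running time of this particular call. An early abort, triggered as soon as a negative $\e(\LL_P)$ is discovered, can only make the procedure finish sooner.

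I do not expect any real obstacle: the corollary is essentially just a repackaging of \refthm{extendtree}. The only mild subtlety worth mentioning is the observation that \MT, although stated for \PCs in \refcor{maketree}, accepts arbitrary dags as input, because it is the specialization of \ET to an empty initial \PC-part with $U=\GG$; its "no" branch is then exactly the decision procedure we want.
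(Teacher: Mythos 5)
Your proposal is correct and follows essentially the same route as the paper: invoke \MT (i.e., \ET with empty initial \PC-part), let its ``no'' branch detect a node with $\e(\LL_P)<0$, and use \reflem{lem:integervalues} to conclude this is equivalent to $(\GG,\del)$ not being a \PC, with the $\Oh(\abs{\GG}^2)$ bound coming from \refcor{maketree}. Your extra remark that the zero initial potential turns the amortized bound into a worst-case one is a harmless (and correct) elaboration of what the paper leaves implicit.
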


The efficiency of \MT is crucial for all our results. 
In particular, \refcor{maketree} improves the cubic time
complexity of \cite{muw11pc} for reduction to quadratic. 

\section{Arithmetic in the semi-direct product $\Z[1/2] \rtimes \Z$}\label{semi}

The basic data structure for this paper deals with the semi-direct product 
$\Z[1/2] \rtimes \Z$. Here $\Z[1/2]$ denotes the ring of rational numbers
with denominators in $2^\N$. Thus, an element in $\Z[1/2]$ is a rational number $r$ which can be written as $r = u2^x$
with $u,x \in \Z$. We view $\Z[1/2]$ as an abelian group with addition. Multiplication 
by $2$ defines an automorphism of $\Z[1/2]$, and hence the semi-direct product 
$\Z[1/2] \rtimes \Z$ becomes a (non-commutative) group where elements are 
pairs $(r,m) \in \Z[1/2] \times \Z$
and with the following explicit formula for multiplication: 
$$ (r,m) \cdot (s,n) = (r + 2^m s, m+n)$$
The semi-direct product 
$\Z[1/2] \rtimes \Z$ is also isomorphic to a group with two generators $a$ and $t$ and the 
defining relation $ta t^{-1}= a^2$. This group is known as the 
Baumslag-Solitar group $\BS(1,2)$. The isomorphism from $\BS(1,2)$ to 
$\Z[1/2] \rtimes \Z$ maps $a$ to $(1,0)$ and $t$ to $(0,1)$. This is a \homo due to 
$(0,1)(1,0)(0,-1)= (2,0)$. It is straightforward to see that it is actually 
bijective. 

We have $(r,m)^{-1} = (-r2^{-m}, -m)$ in $\Z[1/2] \rtimes \Z$, and a sequence of $s$ group operations
may lead to exponentially large or exponentially small values in the first component. 
Binary representation can cope with these values so there is no real 
need for \PC{}s when dealing with the group operation, only.

We equip $\Z[1/2] \rtimes \Z$ with a partially defined \ei{swap operation}.
For $(r,m) \in \Z \times  \Z \sse \Z[1/2] \rtimes \Z$ we define 
$\s(r,m) = (m,r)$. This looks innocent, but note that a sequence of $2^{\Oh(n)}$ defined
operations starting with $(1,0)$ may yield a pair $(0,\tow(n))$ where
$\tow$ is the tower function. Indeed 
$\s(1,0) = (0,1) = (0,\tow(0))$ and 
\begin{equation}\label{swaptower}
\s((0,\tow(n))(1,0)(0,-\tow(n)) = \s(\tow(n+1),0) = (0,\tow(n+1)).
\end{equation}

However, we will show in section \ref{app}:

\begin{theorem}\label{wpsdpz}
The \WP of the algebra $\sdpz$ with swapping is decidable in $\Oh(n^4)$. 
\end{theorem}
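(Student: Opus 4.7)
My plan is to encode every element $(r,m) = (u\cdot 2^x,m)$ of $\sdpz$ as a triple $(U,X,M)$ of markings in a single shared \PC $\Pi$ kept in tree representation, so that $\e(U) = u$, $\e(X) = x$, and $\e(M) = m$. I first parse the input word of length $n$ as an expression tree over the three operations \emph{multiplication}, \emph{inversion}, \emph{swap}, whose leaves are the generators $(1,0)$ and $(0,1)$; each leaf adds only $\Oh(1)$ nodes to $\Pi$.

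I then evaluate this tree bottom-up. For multiplication,
\[
(u_1\cdot 2^{x_1},m_1)(u_2\cdot 2^{x_2},m_2)=\bigl(u_1\cdot 2^{x_1}+u_2\cdot 2^{x_2+m_1},\ m_1+m_2\bigr),
\]
I build the marking $X_2+M_1$, use \refprop{testintree} to compare $\e(X_1)$ with $\e(X_2)+\e(M_1)$, take the smaller as the new exponent $X_3$, shift whichever mantissa carries the larger exponent by the non-negative difference using the clone-and-arc construction of \refsec{PCs}, and finally set $U_3$ to the sum of the (possibly shifted) mantissas and $M_3=M_1+M_2$. Inversion $(u\cdot 2^x,m)^{-1}=(-u\cdot 2^{x-m},-m)$ only flips signs in $U$ and $M$ and subtracts $M$ from $X$. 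For the swap $\s(u\cdot 2^x,m)=(m,u\cdot 2^x)$ I first certify that $u\cdot 2^x\in\Z$ and then set the new triple to $(M,\,\emptyset,\,U\cdot 2^X)$, realizing $U\cdot 2^X$ by the shift construction. After each such step I invoke \ET on the block of newly added nodes to restore a tree representation of $\Pi$.

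Because every operation triggers only $\Oh(1)$ sums, comparisons, and shifts, marking supports grow only additively, so all intermediate markings keep support size $\Oh(n)$, and each shift adds at most $\Oh(n)$ nodes to $\Pi$; hence $|\GG|=\Oh(n^2)$ throughout. By \refthm{extendtree} every \ET call with an input block of $\Oh(n)$ nodes costs amortized $\Oh((|\GG|+n)\cdot n)=\Oh(n^3)$, so summed over the $\Oh(n)$ internal nodes of the expression tree the total cost is $\Oh(n^4)$. The final identity test reduces to verifying $\e(U)=0$ and $\e(M)=0$, each a single $\Oh(|\GG|)=\Oh(n^2)$ comparison via \refprop{testintree}.

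The principal obstacle is correctly handling the partial swap: deciding whether $u\cdot 2^x\in\Z$ and then promoting this integer into the exponent coordinate. My plan is to maintain, after every arithmetic step, the invariant that either $U$ is empty or some $P\in\sig(U)$ has $\e(\LL_P)=0$, equivalently that $\e(U)$ is odd. Under this invariant $u\cdot 2^x\in\Z$ collapses to $\e(U)=0$ or $\e(X)\ge 0$, which is a single \refprop{testintree} query, and in the latter case the clone-and-arc construction directly produces the required integer marking $U\cdot 2^X$. The invariant is restored after each sum by locating in $\sig(U)$ the unique node $P_0$ of minimum value $\e(P_0)$ (available from the order of the tree representation), cloning every $P\in\sig(U)$ and subtracting $\LL_{P_0}$ from the $\LL$-marking of each clone, and finally adding $\LL_{P_0}$ to $X$. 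The extra cost fits inside the $\Oh(n^3)$ amortized budget of the enclosing operation and preserves the overall $\Oh(n^4)$ bound.
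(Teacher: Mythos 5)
Your overall route is essentially the paper's: elements of $\sdpz$ are carried as triples of markings in a \PC, multiplication is shift-and-add where the shift is made non-negative and realized by the clone-and-arc construction, a swap requires an integrality certificate and is realized by cloning the mantissa, all comparisons are done on a reduced circuit, and the circuit size stays quadratic, giving $\Oh(n^4)$. The paper phrases this as a variant of the proof of \refthm{wpbg} (pairwise disjoint circuits, a call to \MT before each test), whereas you keep a single shared circuit in tree representation maintained by \ET, and you use $(U,X,M)$ with a signed exponent instead of the paper's $[U,X,K]$ with $x\le 0\le k$; these are bookkeeping differences, and your amortized accounting ($\Oh(n)$ clones per operation, $\abs{\GG}=\Oh(n^2)$, $\Oh(n^3)$ per \ET call, potential at most $\abs\GG^2$) is consistent with \refthm{extendtree}. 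Your one genuinely new ingredient is the odd-mantissa invariant for deciding $u2^x\in\Z$, and it is sound where you apply it: for a nonempty compact marking $U$ the node values in $\sig(U)$ are distinct powers of two, so the $2$-adic valuation of $\e(U)$ equals $\e(\LL_{P_0})$ for the minimal node $P_0\in\sig(U)$, your division step produces nodes with non-negative successor values (hence still a \PC), and it preserves the represented element.

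There is, however, a hole in the invariant maintenance as you state it: you restore oddness only ``after each sum'', but a swap also destroys it, because the new mantissa is the old second component $M$, which need not be odd. Right after a swap this is harmless (the new exponent is $0$), but an inversion then subtracts $M$ from $X$ without touching the parity of the mantissa, so you can reach an even mantissa together with a negative exponent, and your test ``$\e(U)=0$ or $\e(X)\ge 0$'' returns a false ``undefined''. Concretely, $\s(att)=(2,1)$ is stored as the triple $(2,0,1)$; inverting gives $(-2,-1,-1)$, which represents $(-1,-1)$, whose swap is defined, yet your test rejects it, so an input containing $\s\bigl((\s(att))^{-1}\bigr)$ is mishandled. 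The repair is immediate and free within your budget: run the same renormalization after every operation that rewrites the mantissa (in particular after each swap), or drop the invariant altogether and test $u2^x\in\Z$ directly by comparing $\e(\LL_{P_0})$ with $-\e(X)$ on the compact marking $U$ (the paper leaves this test implicit, and this is what it comes down to). With that repair your argument goes through and matches the paper's bound.
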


We use triples to denote elements in $\Z[1/2] \rtimes \Z$.
A triple $[u,x,k]$ with $u,x,k\in \Z$ and $x \leq 0 \leq k$ denotes the pair $(u2^x,k+x)\in \sdpz$. 
For each element in $\sdpz$  there are infinitely many corresponding triples. 
Using the generators $a$ and $t$ of $\BS(1,2)$ one can write: 
\begin{align*}
[u,x,k] = (u2^x,k+x) &= (0,x)(u,k) \in \Z[1/2]\rtimes\Z\\
& = t^x a^u t^k \in \BS(1,2)\text{ and}\\
[u,x,k] \cdot [v,y,\ell] &= [u 2^{-y} + v2^{k}, x+y, k +  \ell]
\end{align*}

In the following we use \PC{}s with \ei{triple markings} for elements 
in $\Z[1/2] \rtimes \Z$. We consider $T = [U,X,K]$, where 
$U,X,K$ are markings 
in a \PC 
with $\e(U) = u$ and  $\e(X) =x \leq 0 \leq \e(K)= k$; and 
we define
$\e(T)\in Z[1/2] \rtimes \Z$ to be the triple $\e(T) = [u,x,k] = (u2^x,x+k)$.

\section{Solving the \WP in the \baum group}\label{wpg}

The \baum group $\BG$ 
is a one-relator group with two generators $a$ and $b$ and the defining
relation $a^{a^b}=a^2$. (The notation $g^h$ means conjugation, here $g^h =h g h^{-1}$.
Hence $a^{a^b}= b a b^{-1} a b a^{-1} b^{-1}$.) 
The group $\BG$ can be written as an HNN extension of
$\BS(1,2)\simeq\Z[1/2]\rtimes\Z$ with \ei{stable letter} $b$; 
and $\BS(1,2)$ is an HNN extension of
$\Z\simeq \gen{a}$ with \ei{stable letter} $t$:
\begin{align*}
\langle a,b\mid a^{a^b}=a^2\rangle
&\simeq\langle a,t,b\mid a^t=a^2,a^b=t\rangle\cr
&\simeq\mathrm{HNN}(\langle a,t\mid a^t=a^2\rangle,b,\langle a\rangle\simeq\langle t\rangle)\\
&\simeq\mathrm{HNN}\left(\mathrm{HNN}(\langle a\rangle,t,\langle a\rangle\simeq\langle a^2\rangle),b,\langle a\rangle\simeq\langle t\rangle\right)
\end{align*}

Before the work of  Myasnikov, Ushakov and Won (\cite{muw11bg}) $\BG$ had been a possible candidate
for a one-relator group with an extremely hard (non-elementary) 
word problem in the worst case by the result of Gersten \cite{gersten91}. 
(Indeed, the tower function is visible as follows: Let $T(0) = t$ and $T(n+1) = b T(n) a T(n)^{-1}b ^{-1}$.
Then $T(n) = t^{\tow(n)}$ by a translation of \refeq{swaptower}.)
The purpose of this section is to improve the $\Oh(n^7)$ time-estimation of \cite{muw11bg} to cubic time. 
\refthm{wpbg} yields also the first practical algorithm 
to solve the \WP in the \baum group for a worst-case 
scenario\footnote{It is easy to design simple algorithms which perform
extremely well on random inputs. But for all these algorithms fail
on short instances, e.g. in showing $tT(6) = T(6)t$.}. 

\begin{theorem}\label{wpbg}
The \WP of the \baum group $\BG$ is decidable in time $\Oh(n^3)$. 
\end{theorem}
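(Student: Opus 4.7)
The plan is to implement the standard Britton (Magnus break-down) procedure for the HNN extension
$\BG\simeq\HNN(\BS(1,2),b,\langle a\rangle\simeq\langle t\rangle)$,
using the \PC-based triple-marking representation of $\BS(1,2)\simeq\sdpz$ from \refsec{semi}. The pinch relations become $b^{-1}a^u b=t^u$ and $bt^m b^{-1}=a^m$, and, crucially, both replacements are instances of the swap operation $\s$ applied to a triple marking whose evaluation lies in $\langle a\rangle\cup\langle t\rangle$.

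Given an input word $w$ of length $n$ over $\{a^{\pm 1},b^{\pm 1}\}$, first parse it as $w=g_0 b^{\eps_1}g_1\cdots b^{\eps_m}g_m$ with each $g_i\in\{a,a^{-1}\}^*$ and $m\le n$. Build an initial \PC of $\Oh(\log n)$ nodes evaluating to $2^0,\ldots,2^{\lceil\log n\rceil}$ (cf.\ \refex{binarybasis}), encode each $g_i$ as a triple marking $T_i=[U_i,\emptyset,\emptyset]$ whose $\e(U_i)$ is the signed sum of $a$-exponents in $g_i$, and compute a tree representation with \MT in time $\Oh(n^2)$ by \refcor{maketree}.

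Now iterate the following loop on the list $(T_0,\eps_1,T_1,\ldots,\eps_m,T_m)$. Scan for a Britton pinch $b^{\eps_i}T_i b^{-\eps_i}$: if $\eps_i=-1$ check $\e(T_i)\in\langle a\rangle$, i.e., for the compact triple $T_i=[U_i,X_i,K_i]$, check $\e(X_i+K_i)=0$ and that $\e(U_i)\cdot 2^{\e(X_i)}$ is integral; if $\eps_i=+1$ check $\e(T_i)\in\langle t\rangle$, i.e.\ $\sig(U_i)=\emptyset$. By \refprop{testintree} each such test costs $\Oh(|\GG|)$. When a pinch is detected, apply the swap on the triple to replace $b^{-1}a^u b$ by $t^u$ or $bt^m b^{-1}$ by $a^m$, then fold the result into the neighbours using the semi-direct product rule
\[
[u_1,x_1,k_1]\cdot[u_2,x_2,k_2]=[u_1 2^{-x_2}+u_2 2^{k_1},\,x_1+x_2,\,k_1+k_2].
\]
Each such multiplication requires $\Oh(1)$ \PC primitives (cloning, multiplication by $2^{\pm x}$, addition), all carried out through \ET, which, by \refthm{extendtree}, extends the tree representation in amortized time $\Oh((|\GG|+|U|)\cdot|U|)$ per call with $|U|$ fresh nodes.

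Each iteration removes two occurrences of $b^{\pm 1}$, so the loop terminates after at most $n/2$ rounds. A straightforward potential argument shows that the \PC stays at $\Oh(n)$ nodes throughout: the multiplication rule keeps the triple supports \emph{additive} in their inputs, and since the total support at the start is $\Oh(n)$, the amortized number of nodes added per reduction is $\Oh(1)$. Hence every iteration costs $\Oh(n^2)$ amortized, yielding the overall bound $\Oh(n^3)$. When no pinch remains, $w=_{\BG}1$ iff $m=0$ and $\e(T_0)=(0,0)$, checked in $\Oh(n)$ by \refprop{testintree}. The main obstacle is precisely this amortized size analysis: a naive bookkeeping for the iterated cloning in the $\sdpz$-multiplications lets either the \PC or the tree depth grow multiplicatively and would recover only the $\Oh(n^7)$ bound of \cite{muw11bg}; the cubic bound depends essentially on the new quadratic \MT of \refcor{maketree}.
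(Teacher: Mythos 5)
Your overall skeleton (Britton reduction over the HNN decomposition of $\BG$, inner segments stored as triple markings in \PC{}s, $\Oh(n)$ pinch tests at quadratic cost each) is the same as the paper's, but the step you yourself flag as "the main obstacle" -- the claim that the circuit stays at $\Oh(n)$ nodes with $\Oh(1)$ amortized new nodes per operation -- is exactly the hard part, and your argument for it does not work. "The multiplication rule keeps the triple supports additive" only bounds the \emph{weight} (the number of marked nodes of the current triples); it says nothing about the number of nodes accumulated in the circuit by cloning. In the generic implementation you describe (clone the operands, add arcs, call \ET) each multiplication by $2^{\pm x}$ and each swap creates $\Theta(\text{weight})$ fresh nodes: the exponent marking must be cloned to avoid double arcs, and the base marking must be cloned whenever it is not a source -- and cloning is in fact forced by your decision to maintain a single tree representation, since adding arcs to an \emph{existing} node changes its value and invalidates the tree, while \ET only knows how to absorb new nodes on top of an intact reduced part. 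So over $\Oh(n)$ operations the circuit can grow to $\Theta(n^2)$ nodes, and your accounting then gives $\Oh(n^4)$, not $\Oh(n^3)$. This is not a hypothetical loss: $\Oh(n^4)$ is precisely what the paper proves for the general algebra $\sdpz$ with swapping (\refthm{wpsdpz}), where cloning cannot be avoided; the cubic bound for $\BG$ is obtained only by exploiting extra structure.

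Concretely, the paper keeps the triples in (graph-representation) \PC{}s satisfying four invariants -- $U,X,K$ have pairwise disjoint supports, $U$ is a source, all arcs into $\sig(X)\cup\sig(K)$ originate in $\sig(U)$, and arcs from $U$ to $X$ carry the sign opposite to the node-sign in $X$ -- and shows these are preserved. Then every multiplication is done \emph{in place} by adding arcs out of the source $U$ (no cloning, zero new nodes), and the delicate swap $[u,x,k]\mapsto[0,0,u2^x]$ (or $[0,u2^x,0]$) is realized without any new node by complementing the arc set between $\sig(U)$ and $\sig(X)$, which is exactly what the sign invariant is for. With the node count frozen at $\Oh(n)$, each of the $\Oh(n)$ pinch tests is done by building a fresh tree representation with \MT in $\Oh(n^2)$ (\refcor{maketree}) and discarding it, giving $\Oh(n^3)$ overall; no incremental maintenance of a tree representation is attempted. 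To repair your proof you would need to either establish invariants of this kind that eliminate cloning, or find some other mechanism bounding the circuit by $\Oh(n)$ nodes; as written, the potential argument is asserted, not given, and the stated conclusion is false for the data-structure discipline you chose. (A minor further point: with the paper's presentation $a^b=bab^{-1}=t$, the pinch $bTb^{-1}$ requires $T\in\langle a\rangle$ and $b^{-1}Tb$ requires $T\in\langle t\rangle$ -- the opposite of your convention; this is only a relabeling $b\mapsto b^{-1}$, but it must be applied consistently to the input.)
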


\begin{proof}
We assume that the input is already in compressed form given 
by a sequence of letters $b^{\pm 1}$ and pairwise disjoint \PC{}s each of them 
with a triple marking $[U,X,K]$ representing an element in $\sdpz$,
which in turn encodes a word over $a^{\pm 1}$'s and $t^{\pm 1}$'s. 

We use the following invariants:
\begin{enumerate}[i)]
\item $U,X,K$ have pairwise disjoint supports. 
\item $U$ is a source.
\item All incoming arcs to $X\cup K$ have their origin in $U$. 
\item Arcs from $U$ to $X$
have the opposite sign of the corresponding node-sign in $X$. 
\end{enumerate}

These are clearly satisfied in case we start with a sequence of 
$a^{\pm 1}$'s, $t^{\pm 1}$'s, and $b^{\pm 1}$'s. 
The formula $[u,x,k] \cdot [v,y,\ell] = [u 2^{-y} + v2^{k}, x+y, k + \ell]$ allows
to multiply elements in $\sdpz$ without destroying the invariants or increasing
the total number of nodes in the \PC{}s (the invariants make sure that cloning is not necessary). 
The total number of multiplications is bounded by $n$. Taking into account that there are at
most $n^2$ arcs, we are within the time bound $\Oh(n^3)$.

Now we perform from left-to-right \Breduction{}s, see \cite{LS01}. In terms of group
generators this means to replace factors $b a^{s} b^{-1}$ by $t^s$ and
$b^{-1} t^{s} b$ by $a^s$. Thus, if we see a subsequence 
$b [u,x,k] b^{-1}$, then we must check if
$x+k=0$ and after that if $u2^x\in\Z$. If we see a subsequence 
$b^{-1} [u,x,k] b$, then we must check
$u=0$. In the positive case we swap, in the other case we do nothing. 
Let us give the details: 
For a test we compute a tree representation of the circuit using \MT which takes time $\Oh(n^2)$. 
After each test for a \Breduction, the tree representation is deleted. 
There are two possibilities for necessary tests.
\begin{enumerate}[1.)]
\item $u = 0$. If yes,
remove in the original \PC the source $U$, this makes $X \cup K$ a source; replace
$[u,x,k]$ by $[x+k, 0, 0]$. The invariants are satisfied. 
\item $x+k = 0$. If yes, check whether $u2^x \in \Z$. If yes, replace $[u,x,k]$
in the original \PC by either 
$[0,u2^x,0]$ or $[0,0,u2^x]$ depending on whether $u2^x$ is negative or positive. 
We get $u2^x$ without increasing the number of nodes, since arcs from $U$ to $X$ have the opposite 
signs of the node-signs in $X$. Thus, if $E$ has been the set of arcs before 
the test, it is switched to $U \times X \setminus E$ after the test. 
The new marking for $u2^x$ is a source and does not introduce any cycle, because 
its support is still the support of the source $U$. 
\end{enumerate}
It is easy to see that computing a \Breduction on an input sequence of size $n$, 
we need at most $2n$ tests and at most $n$ of them
are successful. Hence we are still within the time bound $\Oh(n^3)$.

At the end we have computed in time $\Oh(n^3)$ a \Breduced normal form where
inner parts (i.e. the ones not involving $b^{\pm 1}$'s) are given as
disjoint \PC{}s. The result follows straightforwardly. \qed
\end{proof}

\section{Higman groups}

The Higman group $H_q$ has a finite presentation with 
generators $a_1 \lds a_q$ and defining relations 
$a_p a_{p-1} a_p^{-1} = a_{p-1}^2$ for all $p \in \Z/ q \Z$. From now on
we interpret indices $p$ for generators $a_p$ as elements of $\Z/q\Z$. 
In particular, $a_q = a_0$ and one of the defining relations says 
$a_1 a_{q} a_1^{-1} = a_{q}^2$. It is known \cite{serre80} that $H_q$ is trivial for $q \leq 3$ and infinite for $q \geq 4$.
Hence, in the following we assume $q \geq 4$. The group $H_4$ was the first 
example of a finitely generated group where all finite quotient groups 
are trivial. It has been another potential 
natural candidate for a group with an extremely hard (non-elementary) 
word problem in the worst case. Indeed, define:
\begin{align*}
w(p,0) &= a_p &\text{ for } p \in \Z / q \Z\\
w(p-1,i+1) &= w(p,i)a_{p-1} w(p,i)^{-1}&\text{ for } i \in \N\text{ and  }  \in \Z / q \Z
\end{align*}
By induction,  $w(p,n) = a_p^{\tow(n)}\in H_q$, where 
$\tow(n)$ is the $n$-th value of the tower function, but the length of the 
words  $w(p,n)$ is $2^{n+1} -1$, only. Hence there is a "tower-sized gap" between
input length and length of a canonical normal form.\footnote{This can be made more precise (and rigorous) by saying that the \ei{Dehn-function} of $H_4$ grows like a tower function (\cite{bridson10})}

For $i,j\in \N$ with $i \leq j$ we define
the group $G_{i \lds j}$ by the generators $a_i, a_{i+1},$ $ \ldots, a_j
\in \oneset{a_1 \lds a_q}$, and defining relations 
$a_p a_{p-1} a_p^{-1} = a_{p-1}^2$ for all $i<p\le j$.
Note that each $G_{i} \simeq \Z$ is the infinite cyclic group. 
The group $G_{1 \cdots q}$ is not $H_q$ because the relation 
$a_1 a_{q} a_1^{-1} = a_{q}^2$ is missing, but $H_q$ is a (proper) quotient
of  $G_{1 \cdots q}$. The groups $G_{i, i+1}$ are, by the very definition, isomorphic to 
the Baumslag-Solitar group $\BS(1,2)$, hence $G_{i, i+1}\simeq \Z[1/2] \rtimes \Z$.
It is also clear that $G_{i \lds j+1}\simeq G_{i \lds j}\ast_{G_j}G_{j,j+1} $ for $j-i < q$. Thus, $G_{123} \simeq G_{12}\ast_{G_2}G_{2,3}$ and
$G_{341} \simeq G_{34}\ast_{G_4}G_{41}$.

For simplicity we deal with $q=4$ only. The free group 
$F_{13}$, generated by $a_1$ and $a_3$ is a subgroup of $G_{123}$ as well as 
a subgroup of $G_{341}$, see e.g. \cite{serre80}. Thus we can build the amalgamated product
$G_{123}\ast_{F_{13}}G_{341}$ and a straightforward calculation shows 
$$H_4 \simeq G_{123}\ast_{F_{13}}G_{341}.$$
This isomorphism yields a direct proof that $H_4$ is an infinite group, see \cite{serre80}. 
In the following we use the following well-known facts about amalgamated products, see \cite{LS01,serre80,ddm10}. 
The idea is to calculate an alternating sequence of group elements 
{}from  $G_{123}$ and $G_{341}$. The sequence can be shortened, only
if one factor appears to be in the subgroup $F_{13}$. In this case we swap
the factor from $G_{123}$ to $G_{341}$ and vice versa. By abuse of language
we call this procedure again a \ei{\Breduction.} (This is perhaps no standard notation
in combinatorial group theory, but it conveniently unifies the same phenomenon in
amalgamated products and HNN-extensions; and the notion of \Breduction generalizes nicely to 
fundamental groups of graphs of groups.) 
Elements in the groups $G_{i,i+1}$ are represented by triple 
markings $T=[U,X,K]$ in some \PC{}. In order to remember that we evaluate $T$ in the group
$G_{i, i+1}$, we give each $T$ a \ei{type} $(i,i+1)$, which is denoted as a subscript.  
For  $\e(T) = [u,x,k]$ we obtain: 
\begin{align*}
\e(T_{(i, i+1)}) &= a_{i+1}^x a_i^{u}a_{i+1}^{k} &\in G_{i, i+1}\\
                 &=a_i^{u2^x}a_{i+1}^{x+k} &\text{ if $u2^x \in \Z$}
\end{align*}

The following basic operations are defined for all 
indices $i$, $i+1$, $i+2$ and $i\in \Z/4 \Z$, but for better readability we just
use indices 1, 2, and 3. 
\begin{itemize}
\item Multiplication: 
\begin{equation}\label{eqmul}
[u,x,k]_{(1,2)} \cdot [v,y,\ell]_{(1,2)} = [u 2^{-y} + v2^{k}, x+y, k +  \ell]_{(1,2)}.
\end{equation}
\item Swapping from $(1,2)$ to $(2,3)$: 
\begin{equation}\label{eqswap}
[0,x,k]_{(1,2)} = [x+k,0,0]_{(2,3)}.
\end{equation}
\item Swapping from $(2,3)$ to $(1,2)$: 
\begin{equation}\label{eqpswap}
[z,0,0]_{(2,3)} = [0,0,z]_{(1,2)} \text{ for } z \geq  0.
\end{equation}
\begin{equation}\label{eqnswap}
[z,0,0]_{(2,3)} = [0,z,0]_{(1,2)} \text{ for } z <  0.
\end{equation}
\item Splitting:
\begin{equation}\label{eq12split}
[u,x,k]_{(1,2)} = [u 2^x,0,0]_{(1,2)}\cdot [0,x,k]_{(1,2)} \text{ for } u 2^x \in \Z.
\end{equation}
\begin{equation}\label{eq23split}
[u,x,k]_{(2,3)} = [0,x,k]_{(2,3)}\cdot [u 2^{-k},0,0]_{(2,3)} \text{ for } u 2^{-k} \in \Z.
\end{equation}
\end{itemize}

From now on we work with a single \PC $\Pi$ together with a sequence 
$T_j$ ($j \in J$) of triple markings of various types. This is given 
as a tuple $\cT = (\GG, \del; (T_j)_{j \in J})$. 
We allow splitting operations only in combination with a multiplication, thus
we never increase the number of \tm{}s inside $\cT$. 
A tuple $\cT = (\GG, \del; (T_j)_{j \in J})$, where $(\GG,\del)$ is in
tree representation is called a \ei{\MDS}.
We keep $\cT$ as a \MDS by doing addition and multiplication by powers of $2$ using
clones and calling \ET on these after each basic operation. 

\begin{definition}\label{def:egin}
The \ei{weight} $\omega(T)$ of a \tm $T = [U,X,K]$ is defined as
$$\omega(T) = \abs{\sig(U)} + \abs{\sig(X)} + \abs{\sig(K)}.$$

The \ei{weight} 
$\omega(\cT)$ of a \MDS $\cT$ is defined as 
$\omega(\cT)= \sum_{j \in J} \omega({T_j}).$ 

Its \ei{size}\footnote{
The definition is justified, since 
we ensure $\abs{J} + \omega(\cT)\leq \Abs{\cT}$ whenever arguing about 
$\Abs{\cT}$.} $\Abs{\cT}$ is defined by $\Abs{\cT} = \abs \Gamma$.
\end{definition}

\begin{proposition}\label{gunnar}
Let $\cT = (\GG, \del; (T_j)_{j \in J})$ be a \MDS of size at most $m$, 
weight at most $w$ (and with $\abs{J} +w\leq m$). The following assertions hold.
\begin{enumerate}[i)]
\item No basic operation increases the weight of $\cT$. 
\item Each basic operation increases the size $\Abs{\cT}$ by $\Oh(w+(\ch(\Pi)-\ch(\Pi)))$. 
\item Each basic operation takes amortized time $\Oh(mw)$. 
\item\label{gunnar:total} A sequence of $s$ basic operations takes time $\Oh(smw+m^2)$
and the size of $\cT$ remains bounded by $\Oh(m+sw)$. 
\end{enumerate}
\end{proposition}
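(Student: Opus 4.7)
The plan is to verify the four assertions in order, using the fact that each basic operation is either the multiplication \refeq{eqmul}, one of the swaps \refeq{eqswap}--\refeq{eqnswap}, or a split \refeq{eq12split}--\refeq{eq23split} immediately followed by a multiplication; since splitting is never done in isolation, the number $\abs J$ of triple markings never grows.

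For (i), I would proceed by case analysis. In the multiplication the three markings of the new triple are obtained by combining the two corresponding input markings (possibly after a shift by a power of $2$, which by the construction recalled in \refsec{PCs} does not enlarge the support), so each has support of size at most the sum of the two input supports; consequently the new weight is at most $\omega(T_1)+\omega(T_2)$, and since two triples are replaced by one the global weight $\omega(\cT)$ does not grow. The swap identities merely move markings between the three slots of a single triple; the only nontrivial bookkeeping is in \refeq{eqswap}, where the new $U$-slot holds the sum $X+K$ with support bounded by $\abs{\sig(X)}+\abs{\sig(K)}$. For splitting, the residual triple reuses the existing markings and the extracted triple of the form $[u2^{x},0,0]$ or $[u2^{-k},0,0]$ has a first slot produced by the power-of-$2$ shift whose support has the same size as $U$, so weights are exactly preserved; because the next step is a multiplication, the net effect on $\omega(\cT)$ is covered by the multiplication case.

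For (ii), each basic operation first clones the supports on which it operates, adding a set $U$ of at most $\Oh(w)$ new nodes, and then calls \ET on the enlarged dag $\GG\cupi U$. Part \ref{et:size}) of \refthm{extendtree} bounds the new size by $\abs\GG+3\abs{U}+(\ch(\Pi)-\ch(\Pi'))$, which gives the claimed growth of $\Oh(w+(\ch(\Pi)-\ch(\Pi')))$. Claim (iii) follows directly from the amortized bound in \refthm{extendtree}: the call to \ET has amortized cost $\Oh((\abs\GG+\abs U)\abs U)=\Oh(mw)$ since $\abs U\le w\le m$, and only the at most two triple markings whose supports meet $U$ need to be re-located in the tree representation; by part \ref{et:sub}) all the other $T_j$ have $\sig(T_j)\sse\GG\sse\GG'$ and are left untouched.

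The main step is (iv). The size bound follows by summing (ii): the $\Oh(w)$ per-operation contributions sum to $\Oh(sw)$, while the chain-count differences $\ch(\Pi_{i-1})-\ch(\Pi_i)$ telescope to $\ch(\Pi_0)-\ch(\Pi_s)\leq m$, so the final size is $\Oh(m+sw)$. For the time bound, the cumulative amortized cost over the $s$ calls to \ET is $\Oh(smw)$ by (iii); since amortized time equals actual time plus the change of the potential $\pot(\Pi)=\ch(\Pi)\cdot\abs\GG$ from \refdef{def:chain}, the actual total time is at most the cumulative amortized cost plus the initial potential $\pot(\Pi_0)\leq m^{2}$, yielding the advertised $\Oh(smw+m^{2})$. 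The delicate point I would watch most carefully is the interplay between the growing size $\abs{\GG_i}$ and the amortized accounting; this is precisely what the potential of \refdef{def:chain}, coupling $\ch(\Pi)$ with $\abs\GG$, is designed to handle.
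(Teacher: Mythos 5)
Your proposal reproduces the paper's own argument essentially step for step: clone the involved markings (which cannot increase the weight), charge each basic operation to a call of \ET so that \refthm{extendtree} gives the $\Oh(w+(\ch(\Pi)-\ch(\Pi')))$ size increase and the $\Oh(mw)$ amortized cost, and for \ref{gunnar:total}) sum these bounds, letting the chain-count differences telescope and adding the initial potential $\pot(\Pi)\le m^2$ to convert amortized into actual time. Your explicit case analysis of the basic operations for (i) and the spelled-out potential bookkeeping in (iv) are merely more detailed renderings of what the paper states tersely, so the two proofs coincide in approach and substance.
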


\begin{proof}
Applying a basic operation means replacing the left-hand side of the equation by the right-hand side,
thus forgetting any markings of the replaced triple(s). 
We can do the necessary tests, because we have a tree representation. For an operation we clone the involved markings,
but this does not increase the weight. Note that there is time enough to create the clones with all their outgoing arcs. 
This yields the increase in the size by $\Oh(w)$. 
With the new clones we can perform the operations by using the algorithms
described in section \ref{PCs} on the graph representation of the circuit. 
We regain the \MDS by calling \ET which integrates the modified
clones into the tree representation. 

In order to get \ref{gunnar:total}) we observe that the initial number of maximal chains
is $m$ and there are at most $\Oh(w)$ new ones created in each basic operation. Hence the total
increase in size is $\Oh(sw)$ and the difference in potential is at most $m(m+sw)$. The
time bound follows. 
\qed\end{proof}

\subsection{Solving the \WP in Higman's group}\label{wph4}

\begin{theorem}\label{wph}
The \WP of $H_4$ can be solved in time $\Oh(n^6)$. 
\end{theorem}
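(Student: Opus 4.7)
The strategy mimics \refthm{wpbg} but on two nested levels, reflecting the amalgamated product structure $H_4 \simeq G_{123} *_{F_{13}} G_{341}$ in which each of $G_{123}$ and $G_{341}$ is itself an amalgamated product of two $\BS(1,2)$-like subgroups over a cyclic subgroup (for instance $G_{123} \simeq G_{12} *_{\langle a_2\rangle} G_{23}$). I would first parse the input word $w$ of length $n$ into an alternating sequence of $G_{123}$- and $G_{341}$-blocks, each block itself being a sequence of triple markings of types in $\{(1,2),(2,3)\}$ or $\{(3,4),(4,1)\}$, with letters $a_1^{\pm1}, a_3^{\pm1}$ assigned to either side as convenient. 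This produces an initial \MDS\ $\cT$ carrying $\Oh(n)$ triple markings, of weight $w=\Oh(n)$ and size $m=\Oh(n)$.

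Next I would perform inner Britton reductions within each block over its cyclic amalgamation. Inside a $G_{123}$-block, scan for a $(1,2)$-triple $[u,x,k]_{(1,2)}$ with $u=0$ and swap via (\ref{eqswap}); or for a $(2,3)$-triple $[v,y,\ell]_{(2,3)}$ with $y+\ell=0$ and $v\cdot 2^y\in\Z$, extract the pure $a_2$-part via the splitting (\ref{eq23split}) and then swap via (\ref{eqpswap}) or (\ref{eqnswap}). After each swap, merge with the flanking neighbours via multiplication (\ref{eqmul}). Each test runs in $\Oh(m)$ time on the maintained tree representation by \refprop{testintree}. The analogous procedure reduces every $G_{341}$-block over the amalgamated subgroup $\langle a_4\rangle$.

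Then I would perform outer Britton reductions over $F_{13}$. A reduced $G_{123}$-block lies in $F_{13}$ precisely when every $(1,2)$-triple is a pure power of $a_1$ (condition $x+k=0$ and $u\cdot 2^x\in\Z$, isolated via (\ref{eq12split})) and every $(2,3)$-triple is a pure power of $a_3$ (condition $v=0$). When this test succeeds, retype each $a_1$-power as a $(4,1)$-triple and each $a_3$-power as a $(3,4)$-triple, multiply into the flanking $G_{341}$-blocks via (\ref{eqmul}), and re-run the inner reduction on the merged block. The symmetric criterion applies on the $G_{341}$-side. The input word represents $1_{H_4}$ \IFF this process collapses $\cT$ to a single trivial block.

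Complexity will then follow from \refprop{gunnar}. The main obstacle will be bounding the total number of basic operations over the interleaved inner and outer passes: each outer reduction strictly decreases the number of blocks (so there are at most $\Oh(n)$ outer steps) and each inner reduction strictly decreases the triple count within its block, while the total triple count is nonincreasing across the whole algorithm. A careful amortised count against these two monotone quantities should yield $s=\Oh(n^2)$ basic operations in total. Plugging this into \refprop{gunnar} with $w=\Oh(n)$ and initial $m=\Oh(n)$ keeps the size bounded by $\Oh(m+sw)=\Oh(n^3)$ and bounds the total running time by $\Oh(smw+m^2)=\Oh(n^6)$, as claimed.
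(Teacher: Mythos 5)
Your skeleton is the same as the paper's: decompose $H_4\simeq G_{123}\ast_{F_{13}}G_{341}$, keep all triples in one \MDS, do inner \Breduction{}s inside each block over the cyclic amalgams, do an outer \Breduction over $F_{13}$, and charge everything to \refprop{gunnar} with $\Oh(n^2)$ basic operations. The genuine gap is your outer membership criterion: you claim an inner-reduced $(1,2,3)$-block lies in $F_{13}$ \emph{precisely when} every $(1,2)$-factor is a pure power of $a_1$ and every $(2,3)$-factor is a pure power of $a_3$. That condition is sufficient but not necessary, because a \Breduced factorization in $G_{123}=G_{12}\ast_{G_2}G_{23}$ is only determined up to inserting elements of $G_2=\gen{a_2}$ between consecutive factors, and powers of $a_2$ may cancel \emph{across} factors. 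Concretely, consider the inner-reduced data $T_1=[1,0,2]_{(1,2)}=a_1a_2^{2}$, $T_2=[0,0,1]_{(2,3)}=a_3$, $T_3=[5,-1,0]_{(1,2)}=a_2^{-1}a_1^{5}$ (it arises already from the freely reduced input word $a_1a_2^{2}a_3a_2^{-1}a_1^{5}$, and such factors are produced all the time by your own swaps and multiplications). No factor lies in $\gen{a_2}$, and $T_1,T_3$ are not powers of $a_1$, so your test answers "$\notin F_{13}$"; yet $a_2^{2}a_3a_2^{-1}=a_3$ in $G_{23}$, so the product equals $a_1a_3a_1^{5}\in F_{13}$ (indeed the adjusted factors $g_1h_1=a_1$, $h_1^{-1}g_2h_2=a_3$, $h_2^{-1}g_3=a_1^{5}$ with $h_1=a_2^{-2}$, $h_2=a_2^{-1}$ are pure powers). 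A false negative here means a block that must be swapped and merged with its $(3,4,1)$-neighbours never is; the final alternating sequence is then not actually reduced over $F_{13}$, so the normal-form theorem for the amalgamated product no longer certifies nontriviality and the algorithm can declare trivial words nontrivial.

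The missing idea is exactly the paper's \reflem{lem:test13}: for an inner-reduced sequence $(g_1\lds g_t)$ one has $g_1\cdots g_t\in F_{13}$ \IFF there exist $h_0,h_1\lds h_t\in G_2$ with $h_0=h_t=1$ and $h_{j-1}g_j=g_j'h_j$, $g_j'\in G_1\cup G_3$; since $F_{13}\cap G_2=\oneset{1}$, the decomposition $g'h$ with $g'\in G_1\cup G_3$, $h\in G_2$ is unique, so the $h_j$ can be computed greedily left to right, each step being one splitting (\ref{eq12split})/(\ref{eq23split}) combined with a multiplication of the $G_2$-part into the next \tm; answer "no" as soon as a splitting fails or if $h_t\neq 1$, and reuse the factors $g_j'$ for the retyping in the "yes" case. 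Your pure-power test is the special case $h_1=\cdots=h_{t-1}=1$. With this correction the rest of your outline goes through: the paper bounds the number of membership tests by $2s$ via the separator invariant on intervals, each test costs $\Oh(s)$ basic operations, and \refprop{gunnar} then gives the claimed $\Oh(n^6)$ bound exactly as you compute.
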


The rest of this section is devoted to the proof of \refthm{wph}. 
For solving the word problem in the Higman group $H_4$ the traditional input is 
a word over generators  $a_{p}^{\pm 1}$.
We solve a slightly more general problem by assuming that the  input consists of a single
\PC $\Pi = (\Gamma, \del)$ together with a sequence of  $s$ triple markings of various types. 
Each \tm $[U,X,K]_{(p,p+1)}$ corresponds to  $a_{p+1}^{\e(X)}a_{p}^{\e(U)}a_{p+1}^{\e(K)} \in H_4$. 

Let us fix $w$ to be the total weight of 
$\cT=(\GG, \del; (T_j)_{1 \leq j \leq s})$. 
For simplicity we assume $s \leq w$ and that $w$ and sizes of clones are bounded by $\Abs{T}=\abs{\GG}$. (This
is actually not necessary, but it simplifies some bookkeeping.) Having $s \leq w \leq n \in \Oh(w)$, we can think of $n = \abs\GG$ as our input size. 
We transform the input $\cT = (\GG, \del; (T_j)_{1 \leq j \leq s})$ into a \MDS by a call of \MT. 

During the procedure $\abs\GG$ increases, but the number
of \tm{}s remains bounded by $s$ and the weight remains bounded by $w$.

In order to achieve our main result we show how to solve the word 
problem with $\Oh(s^2)$ basic operations on the \MDS $\cT$. 
Assume we have shown this. Then, by \refprop{gunnar}, the final size will be bounded by 
$m \in \Oh(s^2w)$; and the time for all basic operations is therefore $\Oh(s^4w^2)\sse\Oh(n^6)$.

We collect sequences of \tm{}s of type $(1,2)$ and $(2,3)$ 
in \ei{intervals} $\cL$, 
which in turn receive type $(1,2,3)$; and we 
collect \tm{}s of type $(3,4)$ and $(4,1)$ in intervals of type $(3,4,1)$.
Each interval has (as a sequence of \tm{}s) a semantics $\eps(\cL)$ which is
a group element either in $G_{123}$ or in $G_{341}$ depending on the type
of $\cL$. Thus, it makes sense to ask whether $\eps(\cL) \in F_{13}$.
These tests are crucial and dominate the runtime of the algorithm. 

Now the sequence $(T_j)_{1 \leq j \leq s}$ of triple markings appears as 
a sequence of intervals: 
$$(\cL_1 \lds \cL_{f};  \cL_{f+1} \lds \cL_t).$$
We introduce a separator ";" dividing the list in two parts. 

The following invariants are kept up:
\begin{enumerate}[i)]
\item All $\cL_1 \lds \cL_{f}$ satisfy $\eps(\cL_i)\notin F_{13}$. In particular, these
intervals are not empty and they represent non-trivial group elements
in $(G_{123}\cup G_{341}) \sm F_{13}$.
\item The types of intervals left of the separator are alternating. 
\end{enumerate}

In the beginning each interval consists of exactly one \tm, thus $f=0$ and $t=s$. 
The algorithm will stop either with $1 \leq f = t$ or with $f=0$ and $t=1$.

Now we describe how to move forward:
Assume first $f=0$. (Thus, $t>1$.) If $\eps(\cL_{1}) \notin F_{13}$, then move the separator to the right, i.e. we obtain 
$f=1$. 
If $\eps(\cL_{1}) \in F_{13}$, then, after possibly swapping $\cL_{1}$, we join the intervals $\cL_{1}$ and $\cL_{2}$
into one new interval. In this case we still have $f=0$, but $t$ decreases by 1.

From now on we may assume that $0<f < t$. 
If $\cL_{f}$ and  $\cL_{f+1}$ have the same 
type, then append $\cL_{f+1}$ to $\cL_{f}$, and move the separator to the left of $\cL_f$. 
Thus, the values $f$ and $t$ decrease by 1. 

If $\cL_{f}$ and  $\cL_{f+1}$ have different types, 
then we test whether or not $\eps(\cL_{f+1}) \in F_{13}$. 
If $\eps(\cL_{f+1}) \notin F_{13}$, then move the separator to the right, i.e.  
$t-f$ decreases by $1$. 
If $\eps(\cL_{f+1}) \in F_{13}$, then we swap 
$\cL_{f+1}$ and join the intervals $\cL_{f}$ and $\cL_{f+1}$
into one new interval. Since we do not know whether the new interval belongs
to $F_{13}$, we put the separator in front of it, decreasing both $f$ and $t$ by 1. 

We have to give an interpretation of the output of this algorithm. 
Consider the case that we terminate with $1 \leq f = t$. 
Then $\eps(\cL_1) \cdots \eps( \cL_{t}) \in H_4$ is a \Breduced 
sequence in the amalgamated product. It represents a non-trivial group
element, because $t\geq 1$. 

In the other case we terminate with $f=0$ and $t=1$. We will make sure that
the test "$\eps(\cL) \in F_{13}$?" can as a by-product also answer the
question whether or not $\eps(\cL)$ is the trivial group element.
If we do so, one more test on $(\cL_1)$ yields the answer we need. 

Now, we analyze the time complexity. 
Termination is clear
once we have explained how to implement a test "$\eps(\cL) \in F_{13}$?". 
Actually, it is obvious that the number of these tests is bounded by $2s$.
Thus, it enough to prove the following claim. 

\begin{lemma}\label{lem:test13}
Every test "$\eps(\cL) \in F_{13}$?" can be realized 
with  $\Oh(s)$ basic operations in the \MDS $\cT$. The test yields either "no" or it says "yes" with the
additional information whether or not $\eps(\cL)$ is the trivial group 
element. Moreover, in the "yes" case we can also swap the type of $\cL$ within the same  bound on basic operations. 
\end{lemma}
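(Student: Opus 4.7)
The plan is a single left-to-right pass which first reduces $\cL$ (WLOG of type $(1,2,3)$) to amalgam normal form in $G_{123}=G_{12}\ast_{G_2}G_{23}$ and then checks $F_{13}$-membership locally at every junction. The key dictionary is: a triple $[u,x,k]_{(1,2)}$ represents $a_1^{u2^x}a_2^{x+k}\in G_{12}$ and a triple $[u,x,k]_{(2,3)}$ represents $a_2^{u2^x}a_3^{x+k}\in G_{23}$. From this, a constant-time test (using \refprop{testintree} and the preconditions of \refeq{eq12split}--\refeq{eq23split}) decides: a $(1,2)$-triple lies in $G_2$ iff $u=0$ and lies in $F_{13}\cap G_{12}=\gen{a_1}$ iff $x+k=0$ and $u2^x\in\Z$; for $(2,3)$-triples the two conditions are \emph{exchanged} (this asymmetry is an artefact of the convention $x\le 0\le k$).

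\emph{Reduction phase.} Scan $\cL$ and repeatedly apply (i) \refeq{eqmul} to merge any two consecutive same-type triples, and (ii) whenever a triple falls into $G_2$, flip its type by \refeq{eqswap}--\refeq{eqnswap} and multiply it into an adjacent triple by \refeq{eqmul}. Each step removes one triple from $\cL$, so after $\Oh(s)$ basic operations the sequence has strictly alternating types and no triple lies in $G_2$; the only degenerate outcome is a single leftover $G_2$-triple $a_2^{\mu}$, arising from an interval that was globally in $G_2$. By uniqueness of the amalgam normal form (Bass--Serre) this reduced shape is canonical.

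\emph{Checking phase.} Writing each surviving $(1,2)$-triple as $a_1^{r_i}a_2^{n_i}$ and each $(2,3)$-triple as $a_2^{r'_j}a_3^{n'_j}$, the product telescopes at every junction to $\cdots a_1^{r_i}\cdot a_2^{n_i+r'_{i+1}}\cdot a_3^{n'_{i+1}}\cdots$. Hence $\eps(\cL)\in F_{13}$ iff (a) at every $(1,2)$--$(2,3)$ junction the central $a_2$-exponent $n_i+r'_{i+1}$ vanishes; (b) every $r_i$ satisfies $u_i2^{x_i}\in\Z$; (c) the free leading/trailing $a_2$-piece (for endpoints of $(2,3)$-type, and for the degenerate single-$G_2$-triple case) equals zero. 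Each is a constant-time MDS test and there are $\Oh(s)$ of them. Triviality is read off simultaneously: $\eps(\cL)=1$ iff the reduced data is a single $G_2$-triple of exponent $0$; otherwise the reduced sequence represents, by normal-form uniqueness, a non-trivial element of $G_{123}$.

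If all tests pass, the reduced data describe $\eps(\cL)$ as an explicit alternating word in $a_1^{\Z}\cup a_3^{\Z}$, which we re-interpret as an interval of type $(3,4,1)$: each $a_1^m$ becomes a $(4,1)$-triple and each $a_3^m$ a $(3,4)$-triple via \refeq{eqpswap}--\refeq{eqnswap}, a constant-time rewriting per triple. The main subtlety I foresee is verifying the asymmetric $F_{13}$-dictionary and the completeness of conditions (a)--(c), which rest on uniqueness of amalgam normal form together with the standard identities $F_{13}\cap G_{12}=\gen{a_1}$, $F_{13}\cap G_{23}=\gen{a_3}$, $F_{13}\cap G_2=\{1\}$ from Bass--Serre theory. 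The basic-operation count is then routine: every reduction step removes a triple and every test is $\Oh(1)$, so the whole procedure uses $\Oh(s)$ basic operations on $\cT$, which by \refprop{gunnar} fits inside the enclosing budget.
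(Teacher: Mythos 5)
Your overall architecture (merge equal types, Britton-reduce by detecting $G_2$-factors and swapping them into a neighbour, re-type $\cL$ at the end) matches the paper's proof, but the checking phase contains a genuine gap. The step ``by uniqueness of the amalgam normal form this reduced shape is canonical'' is not true in the way you use it: a \Breduced sequence $(g_1\lds g_t)$ in $G_{123}=G_{12}\ast_{G_2}G_{23}$ is unique only up to replacing $g_j$ by $c_{j-1}^{-1}g_jc_j$ with $c_j\in G_2$ and $c_0=c_t=1$. Your conditions (a)--(c) are formulated for the particular factors your reduction happens to produce, and they are not invariant under this $G_2$-interleaving; they are sufficient for $\eps(\cL)\in F_{13}$ but not necessary. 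Concretely, consider an interval whose reduced sequence is $g_1=a_1$, $g_2=[1,-1,0]_{(2,3)}=a_3^{-1}a_2$, $g_3=[1,-1,0]_{(1,2)}=a_2^{-1}a_1$, $g_4=a_3$ (each of $g_2,g_3$ arises by one multiplication of two input letters of equal type, the types alternate, and no factor lies in $G_2$, so your reduction phase leaves the sequence unchanged). Its value is $a_1a_3^{-1}a_1a_3\in F_{13}$, yet both $g_2$ and $g_3$ have $u2^x=1/2\notin\Z$, so your integrality test (b) fails under your convention (and the analogous test fails under the opposite left/right convention for the $a_2$-part). Your algorithm would answer ``no'', a false negative that propagates to a wrong answer for the Word Problem.

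The missing idea is that the splitting must be applied not to $g_j$ alone but to $h_{j-1}g_j$, where $h_{j-1}\in G_2$ is a remainder carried from left to right: starting with $h_0=1$, one writes $h_{j-1}g_j=g_j'h_j$ with $g_j'\in G_1\cup G_3$ whenever such a splitting exists (one multiplication plus one basic splitting), answers ``no'' as soon as it does not, and at the last index additionally requires $h_t=1$. This sweep is deterministic and complete because $gh=g'h'$ with $g,g'\in G_1\cup G_3$ and $h,h'\in G_2$ forces $g=g'$ and $h=h'$ (using $F_{13}\cap G_2=\{1\}$), so the sequence $(h_j)$ is uniquely determined and exists exactly when $\eps(\cL)\in F_{13}$. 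In the example the carried remainder $h_2=a_2$ turns the non-splittable $g_3=a_2^{-1}a_1$ into $h_2g_3=a_1$; no purely local, remainder-free test at the junctions can capture this effect, because the remainder changes whether later splittings are integral. With this correction the rest of your argument (the $\Oh(s)$ operation count, the triviality check in the case $t=1$, and the re-typing of $\cL$ using the left factors of the splittings) goes through essentially as in the paper.
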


\begin{proof}
Let us assume that $\cL$ is of type $(1,2,3)$, i.e., it contains only triples of types $(1,2)$ and $(2,3)$. 
Let $s$ be the length of $\cL$. 
The group $G_{123}$ 
is an amalgamated product where $F_{13}$ is a free subgroup 
of rank 2, see \cite{serre80} for a proof. 
In a first round we create a sequence of \tm{}s  
$$(T_1 \lds  T_t)$$
with $t \leq s$ such that for $1 \leq i < t$ the type of $T_i$ is $(1,2)$ \IFF the type of $T_{i+1}$ 
is $(2,3)$. We can do so  by $s-t$ basic multiplications from left-to right without changing the
semantics of $g = \eps(T_1) \cdots \eps(T_t)\in G_{123}$. 
 
Next, we make this sequence \Breduced. Again, we scan from left to right. 
If we are at $T=T_i$ with value $[u,x,k]$ we have to check that 
either $[u,x,k]_{(1,2)}  = (0,z)\in \sdpz$ or $[u,x,k]_{(2,3)} = (z,0)\in \sdpz$
for some integer $z\in \Z$. 
 
For the type $(1,2)$ we have $[u,x,k]_{(1,2)}= (0,z)$ \IFF $u=0$, which in a tree representation
means that the support of the marking for $u$ is empty. Hence this test
is trivial. If the test is positive, we can replace  $[u,x,k]_{(1,2)}$ by $[0,x,k]_{(1,2)}$
and we perform a swap to type $(2,3)$. If $t>1$ we can recursively perform
multiplications with its neighbors, thereby decreasing the value $t$. 
 
For  the type $(2,3)$ we have $[u,x,k]_{(2,3)}= (z,0)$ \IFF 
both $k+x=0$ and $u2^x \in \Z$. Tests are possible in linear time and 
if successful, we continue as in the precedent case. 
 
The final steps are more subtle.  Let $\e(T_j) = g_j \in G_{12} \cup G_{23}$.
Recall that $(g_1 \lds g_t)$ is already a \Breduced sequence. 
We have $g_1 \cdots g_t \in F_{13}$ \IFF there is a sequence
$(h_0, h_1 \lds h_t)$ with the following properties:
\begin{enumerate}[i)]
\item $h_0 = h_t = 1$ and $h_j \in G_2$ for all $0\leq j \leq t$. 
\item $h_{j-1} g_j = g'_j h_{j}$ with $g'_j \in G_{1} \cup G_{3}$
for all $1\leq j \leq t $.
\end{enumerate}

Assume that such a sequence $(h_0, h_1 \lds h_t)$ exists. Then we have 
$g'_j \in G_1$ \IFF $g_j \in G_{12}$. Moreover, whenever
$gh= g'h'\in G_{123}$ with $g,g' \in G_1 \cup G_3$ 
and 
$h, h' \in G_2$, then $g = g'$ and $h=h'$. 
This follows because  $ g'^{-1}g  =h'h^{-1}  \in  F_{13}\cap G_2 = \oneset{1}$.
Thus, the product
$h_{j-1} g_j$ uniquely defines $g'_j \in G_{1} \cup G_{3}$ and $ h_{j}\in G_2$,
because $h_0 = 1$ is fixed. 

The invariant during a computation from left to right is that
$\eps(T_j) = h_{j-1} g_j$. We obtain $\eps(T_j) = g_{j}' h_j$ by a basic
splitting. If no splitting is possible we know that  $g \notin F_{13}$ and we can stop.
If however a splitting is possible, then we have two cases. 
If $j$ is the last index ($j=t$), then, in addition, we must have $h_j=1$. We can test this. 
If the test fails, we stop with $g\notin F_{13}$. 
If we are not at the last index we perform a swap. We split, then swap the 
right hand factor and multiply it with the next \tm, which has the correct type 
to do so. As our sequence has been \Breduced the total number of \tm{}s 
remains constant. There can be no cancelations at this point. 
Thus, the test gives us the answer to "$\eps(\cL)\in F_{13}$?" using $\Oh(s)$ basic operations.
In the case $\eps(\cL) \in F_{13}$ we still need to know 
whether $\eps(\cL) = 1 \in G_{123}$. For $t> 1$ the answer is "no". 
It remains to deal with $t=1$. But a test whether $\eps([u,x,k])=1 $ just means
to test both $u=0$ and $x+k=0$. 

Now, assume we obtain a "yes" answer and we know $\eps(\cL)\in F_{13}$.
We do the swapping of types from left to right by using only the 
left factor in a splitting.  These are additional $s$ basic operations, 
hence the total number of $\Oh(s)$ did not increase. \qed
\end{proof}

\section{Conclusion and future research}\label{conclusion}
The \WP is a fundamental problem in algorithmic group theory. 
In some sense "almost all" finitely presented groups are hyperbolic and 
satisfy a "small cancelation" property, so the \WP is solvable in linear time! 
For hyperbolic groups there are also efficient parallel algorithms and 
the \WP is in $\mathbf{NC^2}$, see \cite{cai92stoc}.
On the other hand, for many naturally defined groups little is known. 
Among one-relator groups the \baum group $\BG$ was supposed to have the hardest
\WP. But we have seen that it can be solved in cubic time. The method generalizes 
to the higher \baum groups $G_{(m,n)}$ in case that $m$ divides $n$, but this
requires more "\PC machinery" and  
has not worked out in full details yet, see \cite{muw11bg}. The situation for 
$G_{(2,3)}$ is open and related to questions in number theory. 
The Higman groups $H_q$ belong to another family of naturally appearing groups 
where the \WP was expected to be non-polynomial. We have seen that the \WP in $H_4$
is in $\Oh(n^6)$. It easy to see that our methods show that the \WP in $H_q$
is always in {\bf P}, but to date the exact time complexity has not been analyzed for $q>4$. 

\baum and Higman groups are built up via simple HNN extensions and amalgamated products.
Many algorithmic problems are open for such constructions, for advances 
about theories of HNN-extensions and amalgamated products we refer to \cite{LohSen06}. 

Another interesting open problem concerns the \WP in \ei{Hydra} groups. 
Doubled hydra groups have Ackermannian Dehn functions \cite{Riley2010arXiv1002.1945D},
but still it is possible that their \WP is solvable in polynomial time. 

\newpage
\addcontentsline{toc}{section}{Bibliography}
\newcommand{\Ju}{Ju}\newcommand{\Ph}{Ph}\newcommand{\Th}{Th}\newcommand{\Ch}{C%
h}\newcommand{\Yu}{Yu}\newcommand{\Zh}{Zh}

\newpage
\section*{Appendix}\label{app}

\subsection*{Reduction of \PC{}s}
 
In this section we give a full proof of \refthm{extendtree}. 
We start with the observation that due to compactness of the markings in a tree
representation, the leaves are automatically ordered by $\e$-value. 
The leftmost has the smallest and the rightmost has the largest $\e$-value.
This easy calculation (essentially an argument about binary sums) is left to the reader. 

Next, we establish some operations on tree representations. 

\begin{lemma}{(Insertion of a new node)}\label{lem:insertnode}
Let $\Pi=(\GG, \del)$ be a \PC in tree representation and $M$ a marking in $\Pi$
given as a leaf in the tree. Then we can in amortized time $\Oh(\abs{\GG})$ insert
a new node $P$ into the circuit with $\LL_P=M$. 
\end{lemma}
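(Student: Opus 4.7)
The plan is to update the given tree representation of $\Pi=(\GG,\del)$ into one for the enlarged circuit $\Pi'=(\GG\cup\oneset{P},\del')$, where $\del'$ agrees with $\del$ on $\GG\times\GG$ and $\LL_P=M$. The task splits naturally into three steps: (a) locate the unique position $k$ in the sorted list $[P_1,\ldots,P_n]$ satisfying $\e(P_{k-1})<\e(P)\le\e(P_k)$; (b) update the sorted list and the bit vector $b$ to reflect any new doubling relation involving $P_{k-1}$ or $P_{k+1}$; (c) extend the ternary tree by one level corresponding to the coefficient of $P$ while preserving compactness, and update the per-level node lists. Because $\e(P)=2^{\e(M)}$ and $\e(P_i)=2^{\e(\LL_{P_i})}$, step (a) reduces to ranking $\e(M)$ among the values $\e(\LL_{P_1})<\cdots<\e(\LL_{P_n})$.

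For step (a) I will exploit that, by compactness, the leaves of the tree appear left-to-right in strictly increasing $\e$-order, and that every $\LL_{P_i}$ is itself a leaf. I augment each internal tree-node with a pointer to the leftmost and rightmost $\LL_{P_i}$-leaf in its subtree; these annotations can be built and maintained with only constant additional work per tree edit. Starting from the distinguished input leaf $M$, I walk upward to the root and at each ancestor inspect the left-siblings of the current path, using the annotations to retrieve in $O(1)$ the rightmost $\LL_{P_i}$-leaf to the left of $M$; a symmetric walk finds the closest one to the right, and the pair determines $k$. The walks run in $\Oh(\abs{\GG})$ because the tree has height $\abs{\GG}$. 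Step (b) then requires at most two invocations of \refprop{testintree} (to test $\e(P)=2\e(P_{k-1})$ and $\e(P_{k+1})=2\e(P)$), each costing $\Oh(\abs{\GG})$.

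Step (c) is the most delicate. I splice a new level into the tree at depth $k$: every edge running from a node at depth $k-1$ to a node at depth $k$ in the old tree is replaced by a length-two path through a fresh node at the newly inserted depth $k$, with the new outgoing edge labelled by the coefficient of $P$. A $0$-labelled new edge preserves the marking represented by any leaf below, so compactness is preserved automatically. A $\pm 1$-labelled new edge is introduced only when doing so keeps the marking compact: the label two levels above (coefficient of $P_{k-1}$) and the label two levels below (coefficient of $P_{k+1}$) on the same root-to-leaf path must not conflict with the doubling relations encoded in the updated bits $b(k-1)$ and $b(k)$. Each such check is $\Oh(1)$ per candidate branch, and the per-level node lists are extended accordingly. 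The main obstacle is the amortized analysis: inserting $P$ raises $\abs{\GG}$ by one and alters $\ch(\Pi)$ by at most one, so the potential $\pot(\Pi)=\ch(\Pi)\cdot\abs{\GG}$ grows by at most $\ch(\Pi)+\Oh(1)\in\Oh(\abs{\GG})$; I must certify that the actual work in step (c) — dominated by the number of new tree-nodes inserted at depth $k$ together with the $\pm 1$-branches retained by compactness — is absorbed by this growth, which amounts to a careful accounting of how chains and tree-edges interact under the splice.
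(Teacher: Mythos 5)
Your steps (a) and (b) are in substance the paper's: the insertion position is read off from the position of the leaf $M$ among the leaves of type $\LL_Q$ (the paper simply counts those leaves lying to the left of $M$; no subtree annotations are needed), and the bit vector is settled by two calls to \refprop{testintree}. The genuine problem is step (c). The decisive observation in the paper's proof is that the freshly created node $P$ lies in the support of \emph{no} existing marking --- it has just been created --- so when the tree is stretched by one level, \emph{every} new edge is labelled $0$. There is nothing to decide: a $0$ label leaves the support, hence the value and the compactness, of every leaf marking untouched, and the splice is a plain linear-time pass using the per-level node lists. Your plan to insert $\pm 1$-labelled edges at the new level ``when compactness allows'' is not only unnecessary but incorrect: a nonzero label on the new level changes the marking represented by every leaf below it (that marking would now contain $\pm P$), so the tree would no longer represent the markings $\LL_Q$ and $M_j$ it is required to represent.

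Second, your amortized accounting is left open and argued in the wrong direction. You write that the actual work of step (c) must be ``absorbed by'' the growth of the potential; but an increase of $\pot(\Pi)=\ch(\Pi)\cdot\abs{\GG}$ is a \emph{cost} added to the amortized time, not a credit --- only potential \emph{decreases} pay for work. The correct and short argument is the paper's: with all new edges labelled $0$ the actual work is $\Oh(\abs{\GG})$, and since $\abs{\GG}$ and $\ch(\Pi)$ each grow by at most one, the potential increases by $\Oh(\abs{\GG})$ as well; both contributions fit the claimed amortized bound, so no ``careful accounting of how chains and tree-edges interact under the splice'' is needed. The ``main obstacle'' you identify is an artefact of the spurious $\pm 1$ branches in step (c), and your write-up does not resolve it.
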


\begin{proof}
If a node $P$ with $\e(P)=2^{\e(M)}$ already exists, we abort immediately. Otherwise
the index of the leaf corresponding to the (compact) marking $M$ inside the list of leaves tells
us the position of $P$ in the sorting of $\GG$. (Actually, we have to count the
number of leaves of type $\LL_Q$ ($Q\in\GG$) that are left of $M$.)

Next, we need to update the bit vector $b$. This is achieved by the procedure
described in \refprop{testintree}, which tells us whether $\e(M)+1=\e(\LL_Q)$ where
$Q$ is the node succeeding $P$ in the ordering of $\GG$. 

Finally, we have to "stretch" the tree defined by $\DD$ by inserting a new level corresponding
to the new node $P$. All the edges on that level have to be labeled by "0", as no
marking uses the newly created node yet. 
This can be done in linear time using the lists of nodes we keep for each level. 

Note that this may increase the potential by $\abs{\GG}$, since both
$\abs{\GG}$ and the number of chains might grow by one. This adds
$\abs{\GG}$ to the amortized time, which is captured by the $\Oh$-notation. 
\qed\end{proof}

\begin{lemma}{(Incrementation of a marking)}\label{lem:incmarking}
Given a marking $M$ in a tree representation of $\Pi=(\GG,\del)$ one can generate
in $\Oh(\abs{\sig(M)})$ time a marking $M'$ with $\e(M')=\e(M)+1$. 
\end{lemma}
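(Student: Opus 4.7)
The idea is to build $M'$ from $M$ with just a constant number of entry changes, using the sorted node list and the doubling bit vector $b$ of the tree representation to decide what to change.

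\textbf{A key structural observation.} First I would verify that in any tree representation the node $P_1$ (the minimum in the sorted list) is the unique sink and satisfies $\e(P_1)=1$. Indeed, every non-empty dag has at least one sink, and a sink $S$ has $\LL_S=0$, hence $\e(S)=2^0=1$. Since the tree representation demands pairwise distinct $\e$-values, only $P_1$ can have value $1$, and it must therefore be the unique sink. In particular, $\clone(P_1)$ is an $\Oh(1)$ operation: there are no outgoing arcs to copy. (If $\GG=\es$, simply create a new sink from scratch.)

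\textbf{Case analysis on $M(P_1)$.} I would modify $M$ as follows:
\begin{enumerate}
\item[(i)] If $M(P_1)\in\{-1,0\}$, set $M'(P_1):=M(P_1)+1$ and leave $M$ untouched elsewhere; clearly $\e(M')=\e(M)+1$.
\item[(ii)] If $M(P_1)=+1$ and $b(1)=1$, then $P_2\in\GG$ has $\e(P_2)=2$. Compactness of $M$ forces $M(P_2)=0$, for otherwise $P_1,P_2\in\sig(M)$ with $\e(P_2)=2\e(P_1)$, violating the compactness condition. Set $M'(P_1):=0$ and $M'(P_2):=+1$, so that $\e(M')-\e(M)=-\e(P_1)+\e(P_2)=+1$.
\item[(iii)] If $M(P_1)=+1$ and $b(1)=0$, no node of value $2$ exists in $\GG$. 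Clone $P_1$ to produce a new sink $P_1'$ with $\e(P_1')=1$, and set $M'(P_1'):=+1$; again $\e(M')=\e(M)+1$.
\end{enumerate}
In all three cases $M'$ differs from $M$ in $\Oh(1)$ positions, so producing $M'$ (i.e.\ reading those positions of $M$ and writing the updates) runs in $\Oh(|\sig(M)|)$.

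\textbf{Main obstacle.} Only case (iii) is subtle, since the natural ``carry'' target---a node of $\e$-value $2$---is absent. The resolution relies crucially on the preliminary observation that $P_1$ is a sink, which turns cloning into a constant-time operation. The fresh node $P_1'$ may temporarily violate the distinct-$\e$-values invariant of the tree representation, but that is tolerated: \reflem{lem:incmarking} is meant to be used inside \ET, where the tree representation is subsequently rebuilt in a single batched pass.
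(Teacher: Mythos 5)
Your cases (i) and (ii) coincide with the paper's argument (increment at the value-$1$ node, or carry to the value-$2$ node, which is unmarked by compactness). The gap is in your case (iii), which is exactly the boundary case the whole lemma hinges on. Cloning $P_1$ produces a second node of value $1$, and your $M'$ marks both copies with $+1$. This destroys the defining invariant i) of \refdef{def:treerep} (the list $[P_1\lds P_n]$ must satisfy $\e(P_i)<\e(P_{i+1})$, so values are pairwise distinct), and it also makes $M'$ impossible to compactify inside the circuit: a compact marking cannot contain two nodes of equal value, and the only way to resolve two $+1$'s on value-$1$ nodes is to carry into a node of value $2$ -- precisely the node whose absence defines case (iii). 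So the clone does not avoid the creation of that node; it only postpones it while corrupting the data structure. Your justification that this is ``tolerated'' because \ET later rebuilds the tree representation ``in a single batched pass'' misdescribes \ET: in the proof of \refthm{extendtree} the tree representation of the $\GG$-part is maintained incrementally as an invariant, and the output of the incrementation lemma is fed \emph{immediately} into \reflem{lem:compactify} (which explicitly works \emph{without changing the circuit} and presupposes the chain structure) and then \reflem{lem:insertnode}, both of which rely on distinct $\e$-values, the sorted list, and the bit vector. Comparisons via the tree (Prop.~\ref{testintree}) and joker creation would likewise break.

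The paper resolves this case differently: if $M(P_1)=+1$ and no node of value $2$ exists, it \emph{creates} the value-$2$ node and carries into it (and in the calling context of \ET this case is in fact excluded, because a joker has been inserted beforehand, which guarantees that the chain starting at the value-$1$ node already contains a value-$2$ node; the lemma itself only has to cover the degenerate situation where the circuit is essentially $\{P_1\}$, so the insertion is cheap). Your structural observation that $P_1$ is the unique sink with $\e(P_1)=1$ is correct and cloning it is indeed $\Oh(1)$, but the correct move in case (iii) is to add the value-$2$ node (or to argue, as the surrounding machinery does, that it is already there), not a duplicate value-$1$ node.
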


\begin{proof}
If $\GG$ is empty, the claim is obvious. Otherwise
let $P$ be the unique node in $\GG$ with $\e(P)=1$. If $M(P)\neq +1$, we
increment $M(P)$ by one and are done. Otherwise, we look for a node $P'$ with
$\e(P')=2$. If it doesn't exist (in which case $\GG=\{P\}$ and $\e(M)=1$), we
create it and put $M(P)=0$ and $M(P')=+1$. If $P'$ does exist, then $M(P')=0$
due to compactness. Again, put $M(P)=0$ and $M(P')=+1$. 

Note that in the last case the newly created marking is not necessarily compact
and therefore cannot be inserted immediately into the tree representation as a leaf. 
We will deal with this in the next lemma. 
\qed\end{proof}

\begin{lemma}{(Making a marking compact)}\label{lem:compactify}
Let $\Pi=(\GG,\del)$ be a \PC in tree representation and $M$ be a marking in $\Pi$
(not yet a leaf and in particular not compact; e.g. given as a list of signed
pointer to nodes). Assume that for each node $P\in\sig(M)$ the last node $T$ in the
longest chain starting in $P$ is not marked by $M$. 
Then $M$ can be made compact in time $\Oh(\abs{\sig(M)})$ (and after that integrated
into $\Pi$ as a leaf in time $\Oh(\abs{\GG})$) without changing the circuit and
without increasing the weight of $M$. 
\end{lemma}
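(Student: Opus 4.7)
The plan is to handle each maximal chain separately, since compactness imposes no restriction across different chains. Within a single chain $(P_0,\dots,P_r)$ the identity $\e(P_{j+1})=2\e(P_j)$ turns the restriction of $M$ to that chain into a signed-binary expansion $\sum_j M(P_j)2^j$, and compactness amounts to requiring that no two consecutive digits be nonzero. Compactification therefore reduces to normalizing a signed-binary representation by carry propagation within each chain.

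First I would scan the list $\sig(M)$ once, using the bit vector $b$ (together with standard auxiliary information attached to the tree representation, e.g.\ a pointer from every node to its chain head and to its offset within the chain) to bucket the marked nodes by chain and by position. Each lookup is $\Oh(1)$, so this step costs $\Oh(\abs{\sig(M)})$ in total. For every chain I would then sweep from the bottom upward, combining at each position $j$ the current mark with any incoming carry. The possible signed sums lie in $\oneset{-2,-1,0,+1,+2}$ and are handled in $\Oh(1)$ each: values in $\OS$ are written in place, while a $\pm 2$ is replaced by $0$ at position $j$ together with a carry $\pm 1$ passed to $j+1$, which continues the cascade. A standard amortized argument charges every unit of carry work to an originally marked position, since a cascade can only transit positions that were themselves in $\sig(M)$ and terminates at the first previously unmarked position it reaches; hence the total cost summed over all chains is $\Oh(\abs{\sig(M)})$. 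Inspection of the local rewrites also shows that they never increase the count of nonzero coefficients, whence $\abs{\sig(M')}\le\abs{\sig(M)}$ and the weight does not grow.

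The main obstacle, and the one place where the hypothesis is used, is ensuring that no carry ever escapes its chain: otherwise we would have to create a new node above the top of the chain, which would both change the circuit and violate the weight bound. The assumption that the top node $T$ of every chain meeting $\sig(M)$ is initially unmarked is exactly what prevents this: the final carry of every cascade lands inside the chain (at worst at $T$ itself) and stops there. Once $M'$ has been produced in compact form, installing it as a new leaf of the ternary tree is a single root-to-leaf walk that follows the label $M'(P_i)$ at level $i$ and inserts a fresh edge whenever the required one is missing; this pass takes $\Oh(\abs{\GG})$ time, matching the claim.
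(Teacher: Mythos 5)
There is a genuine gap in the central step. You correctly observe that compactness within a chain is exactly the non-adjacent-form condition (no two consecutive positions both carry a nonzero sign), but the local rewrite you then describe does not implement it: your rule only fires when the digit-plus-carry sum equals $\pm 2$. Since a marking by definition takes values in $\OS$, every position of the input chain has digit in $\{-1,0,+1\}$ and the initial carry is $0$; hence at the bottom position the sum lies in $\OS$, no carry is ever created, and by induction your sweep writes every digit back in place. In other words, on any legal input the procedure is a no-op, and configurations such as $(+1,+1)$ or $(+1,-1)$ on two consecutive chain nodes -- precisely the situations the lemma must eliminate -- survive untouched. What is needed are rewrites that trade adjacency for a carry, e.g.\ $(+1,-1)\mapsto(-1,0)$ on an adjacent pair, and, for a maximal run of $k\ge 1$ consecutive $+1$'s followed by a node not marked $+1$, the replacement $(+1,\dots,+1)\mapsto(-1,0,\dots,0)$ together with adding $+1$ at the next position (symmetrically for $-1$'s); this uses $2^0+\cdots+2^{k-1}=2^k-1$. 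This is exactly how the paper's proof proceeds, scanning the marked nodes in ascending order of value, and it is only for these carry-producing rewrites that the hypothesis about the top node $T$ of the chain is actually needed: the carried $\pm 1$ must land on an existing node, at worst on $T$, so the circuit is unchanged and no new node is required. With your rules the hypothesis is never invoked, which is a further sign that the compactification work is missing.

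Your surrounding framework is sound and matches the paper in spirit: bucketing $\sig(M)$ by chain in $\Oh(\abs{\sig(M)})$, an amortized charge of each carry step to an originally marked position, the observation that the rewrites do not increase the number of nonzero coefficients (which does hold for the correct rules above, since a run of $k$ marks is replaced by at most two marks plus a possible merge at the next position), and the $\Oh(\abs{\GG})$ insertion of the resulting compact marking as a leaf. So the fix is local: replace the overflow rule by the run/pair rewrites and re-run your amortization argument, which then goes through essentially as in the paper.
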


\begin{proof}
We look at the nodes $P\in\sig(M)$ in ascending order (w.r.t. their value).
There are essentially two ways for $M$ not to be compact at a point $P$: 
Assume that $M(P)=+1$ ($-1$ is similar). 
\begin{enumerate}[1.)]
\item $P$ is the first node in a chain length $2$ which $M$ labels $(+1,-1)$. 
Replace it by $(-1,0)$. 
\item $P$ is the first node of a chain $(P=P_1,P_2\lds P_k,P_{k+1})$ labeled
$M(P_i)=+1$ ($1\le i\le k$) and $M(P_{k+1})\neq +1$. Note that by assumption $P_{k+1}$ exists. 
Replace the labels by $(-1,0\lds 0,+1)$. We might need to repeat this (if there
is a node $P_{k+2}$ with $\e(P_{k+2})=2\e(P_{k+1})$ and $M(P_{k+2})=+1$) but this
ultimately stops at $T$. 
\end{enumerate}
\qed\end{proof}

Often we need to increment a leaf marking by one and make it compact. In order to have
the necessary nodes, we introduce the following concept:

\begin{definition}\label{def:joker}
Let $\Pi= (\GG, \del)$ be a power circuit in tree representation. A node $J$
is called a \ei{joker}, if it is the last in a maximal chain starting at
the unique node with value $1$ and $J$ is not used in any leaf marking.
\end{definition}

\begin{lemma}\label{lem:createjoker}
Let $\Pi=(\GG,\del)$ be a power circuit in tree representation.
Then we can in amortized time $\Oh(\abs{\GG})$ insert
a joker into the circuit. 
\qed\end{lemma}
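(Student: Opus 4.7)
The plan is to extend the maximal chain starting at the unique node $P_0$ of value $1$ by inserting one fresh node at its top. Writing that chain as $(P_0, P_1, \ldots, P_r)$, we have $\e(P_r) = 2^r$ and $\e(\LL_{P_r}) = r$; the joker will be a new node $J$ with $\LL_J$ of value $r+1$, so $\e(J) = 2^{r+1} = 2\e(P_r)$. Since $J$ is freshly created it cannot occur in any existing leaf marking, hence it qualifies as a joker per \refdef{def:joker}, and because $\e(J)$ doubles $\e(P_r)$ it sits at the new top of the (extended) main chain.

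Concretely, I would first locate $P_0$ in the sorted list of $\GG$ (creating it via \reflem{lem:insertnode} with the empty $\LL$-marking if no node of value $1$ exists, and then $r = 0$), then walk up the chain using the bit vector $b$ until a zero appears; this costs $\Oh(\abs{\GG})$ time. Second I apply \reflem{lem:incmarking} to $\LL_{P_r}$, producing a marking $M'$ with $\e(M') = r+1$ in time $\Oh(\abs{\sig(\LL_{P_r})})$, which may not be compact. Third I apply \reflem{lem:compactify} to turn $M'$ into a compact marking, and finally invoke \reflem{lem:insertnode} with $\LL_J = M'$ to install the new node $J$. Each step runs in amortized $\Oh(\abs{\GG})$ time, and the potential does not grow because the number of maximal chains can only decrease (the extended main chain may even swallow a previously isolated chain starting at $2^{r+1}$), so we stay within the claimed bound.

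The main point to justify is the precondition of \reflem{lem:compactify}: for every $P \in \sig(M')$, the top $T$ of the maximal chain through $P$ must satisfy $M'(T) = 0$. The crucial observation is that every node in $\Pi$ has a value which is a positive power of $2$, and a short estimate on compact signed binary representations shows that any compact marking of value $r$ can only involve nodes whose values are at most $3r/2$. For $r \geq 1$ one has $3r/2 < 2^r$, and the dag property excludes $P_r \in \sig(\LL_{P_r})$, so in fact $\sig(\LL_{P_r}) \sse \{P_0, \ldots, P_{r-1}\}$. The incrementation can only touch $P_0$ and the node of value $2$ (if present), both of which still lie strictly below $P_r$ in the main chain. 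Consequently for every $P \in \sig(M')$ the relevant chain top is $P_r$, and $M'(P_r) = 0$; the precondition holds and the construction goes through. The subtle part is precisely this verification, since nothing in the statement of \reflem{lem:compactify} prevents $\sig(M')$ from containing both a node and its chain top in general, but the specific structure of $\LL_{P_r}$ rules this out here.
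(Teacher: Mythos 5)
There is a genuine gap: a single insertion at the top of the maximal chain through $P_0$ does not in general produce a joker. Maximality of the chain $(P_0\lds P_r)$ only guarantees that no node of value $2^{r+1}$ exists; nothing prevents the circuit from containing nodes of value $2^{r+2},2^{r+3},\ldots$ forming a separate maximal chain higher up. After you create $J$ with $\e(J)=2^{r+1}$, the chain starting at $P_0$ then continues through those old nodes, so $J$ is \emph{not} the last node of the maximal chain starting at the value-$1$ node, and the actual last node is an old node that may well occur in leaf markings; hence neither $J$ nor that node satisfies \refdef{def:joker}. You even notice the phenomenon ("the extended main chain may even swallow a previously isolated chain"), but you only use it to argue about the potential, not realizing that exactly in this case your claim that $J$ "sits at the new top of the extended main chain" is false. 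The paper's proof closes this gap by iterating: after creating the new node, it checks whether a node of double its value already exists; if so, the insertion merged two maximal chains, the potential drops by $\abs{\GG}$ and thereby pays for the $\Oh(\abs{\GG})$ work of that round, and the construction is repeated at the new top of the chain. The process stops only when the freshly created node is the end of a maximal chain, and that node is the joker. This iteration (and its amortized accounting) is the essential content of the lemma and is missing from your argument; without it the stated $\Oh(\abs{\GG})$ amortized bound would in any case not be achievable by naively repeating your one-step construction.

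A smaller flaw: your verification of the precondition of \reflem{lem:compactify} ("the chain top $P_r$ is unmarked") fails in the edge cases $r\in\oneset{0,1}$, where the incrementation of \reflem{lem:incmarking} can put the mark exactly on $P_r$ (e.g. $r=1$, $\LL_{P_1}=P_0$, incremented to $P_1$). There the resulting marking happens to be already compact, so nothing breaks, but the case distinction should be made explicit. Your bound that a compact marking of value $r$ only uses nodes of value at most $3r/2$, and hence $\sig(\LL_{P_r})\sse\oneset{P_0\lds P_{r-1}}$ for $r\ge 1$, is correct and is a reasonable substitute for the paper's device of computing a compact representation of the integer $r+1$ directly in binary; but it does not repair the main gap described above.
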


\begin{proof}
Start at the node $P_0$ with $\e(P_0)=1$. Using the bit vector, find the first
"gap" in the chain starting at $P_0$, i.e., the largest $i$ such that
$P_0\lds P_{n-1}$ exist with $\e(P_i)=2^i$. Keep the number $n$ in binary notation. 
Like in \reflem{lem:compactify} compute a compact representation of $n$. Note that
we are dealing with ordinary numbers here, not circuits! Use the compact
representation of $n$ to create $P_n$ with $\e(P_n)=2^n$. Check whether there is
a node $P_{n+1}$ and adjust the bit vector. If yes, $P_n$ linked two maximal chains,
so in amortized analysis we don't have to account for the $\Oh(\abs{\GG})$ time
used so far. Repeat the process until we create a node that is the end of a maximal
chain. This is the joker. 
\qed\end{proof}

Now we are ready to prove \refthm{extendtree}. 

\begin{proofof}{\refthm{extendtree}}{
Let $n=\abs{\GG}+\abs{U}$. We may assume $n\ge 1$. 

Perform a topological sorting of $U$, i.e. find an enumeration
$U =\oneset{Q(0)\lds Q(\abs{U}-1)}$ such that there are
no arcs from any $Q(i)$ to $Q(j)$ when $i\le j$. 
Since $\Pi$ is a dag, a topological ordering of $U$ exists
and it can be found in time $\Oh(n\abs{U})$, see e.g. \cite{CLRS09}. 
The nodes of $U$ will be moved to $\GG$ in ascending topological order,
so that all the time $\GG$ remains a circuit, i.e., there are
no arcs from $\GG$ to $U$. 

Let $M$ be any one of the markings $M_j$ ($j=1\lds m$) or $\LL_P$ ($P\in U$). 
While the nodes of $U$ are being moved to $\GG$, there may be times when
the support of $M$ is partly in $\GG$ and partly still in $U$. Later it will be
completely contained in $\GG$. We will maintain the following invariants:
\begin{enumerate}[i)]
\item Any marking whose support is completely contained in $\GG$ is represented as a leaf (and thus compact).
\item\label{et:topinv} For all other markings $M$ and all nodes $P\in\sig(M)\cap\GG$
there is a chain starting at $P$ and ending at a node $T\not\in\sig(M)$ such that
there is node node with double the value of $\e(T)$ (i.e. the chain cannot be
prolonged at the top end).
\end{enumerate}

We now describe how to do the moving of the topologically smallest node $Q$ of $U$. 

We have $\LL_Q\sse\GG$, so by the invariant, this is a leaf. 
Hence it is possible to test whether $\e(\LL_Q)<0$ using \refprop{testintree}. 
If this is the case, $\Pi$ is not a \PC and we stop with the output "no". 
From now on we assume $\e(Q)\ge 1$.

\begin{enumerate}[1.)]
\item\emph{Insert a new joker}\\
See \reflem{lem:createjoker}. Now we don't have to worry about incrementing
compact markings anymore. 
\item\emph{Find a replacement $P$ for $Q$ in $\GG$}\\
Check whether there is a node $P\in\GG$ with value $\e(P)=\e(Q)$. If not, use
\reflem{lem:insertnode} to create it taking the marking $\LL_Q$ as $\LL_{P}$. 
This takes amortized time $\Oh(\abs{\GG})$.
\item\emph{Adapt markings using $Q$:}\\
Using the bit vector, go up the chain in $\GG$ starting at $P$. Prolong the chain
at the top by creating a new node $P'$ (use \reflem{lem:incmarking} on the
successor marking of the last node of the chain, insert it into the tree via
\reflem{lem:compactify} and use it as a successor marking for creating the
new node $P'$ with \reflem{lem:insertnode}. The time needed is $\Oh(\abs{\GG})$. \\
Go through all markings $M$ ($\LL_{Q'}$ for $Q'\in U$ and $M_j$ for $j=1\lds m$)
that have $Q\in\sig(M)$. Replace $Q$ by $P$ in $M$. If this leads to a double
marking of $P$ by $M$, replace those by the next node in the chain. Again, that
node might become doubly marked by $M$, so repeat this. This stops at the latest
at $P'$ which is new and thus unmarked $M$. For each of these steps, the support
of $M$ decreased by one, so the total time (for all $Q\in U$) is bounded by
the sum of the sizes of all supports, i.e., $n\cdot\abs{U}$ for successor
markings and $\abs{\sig(M_1)}+\cdots+\abs{\sig(M_m)}$ for the markings $M_j$ ($j=1\lds m$). 
Now $Q$ is not part of any marking anymore and can be deleted. 
\item\emph{Make markings compact:}\\
If $Q$ was the last node of a marking $M$ to be moved from $U$ to $\GG$, we have
to make $M$ compact and create a leaf in the tree. This is done by using
\reflem{lem:compactify}. Note that we have the invariant \ref{et:topinv}). 
We need time $\Oh(\abs{\sig(M)}+n)$, which over the whole procedure sums up to 
$\Oh(n\cdot\abs{U})$ for successor markings and 
$\Oh(\abs{\sig(M_1)}+\cdots+\abs{\sig(M_m)}+m\cdot n)$ for the markings $M_j$ ($j=1\lds m$). 
\item\emph{Make room for later compactification:}\\
Start at $P'$ and create a new node $T$ that has value $\e(T)=2\e(P')$. Check
whether there is a node with value $2\e(T)$. Is yes, the creation of $T$ has
linked two maximal chains, thus decreasing the potential by $\abs{\GG}$. This
pays for the $\Oh(\abs{\GG})$ time needed for creating $T$ and the check. 
Repeat this until we create a $T$ that has no node with double the value of $T$.
Note that this takes only amortized time $\Oh(\abs{\GG})$. 
\end{enumerate}
}\end{proofof}

\subsection*{A more detailed look at \PC{}s}

\begin{lemma}\label{lem:integervalues}
The following assertions are equivalent: 
\begin{enumerate}[1.)]
\item $\e(P)\in 2^{\N}$ for all nodes $P$, 
\item $\e(\LL_P)\geq 0$ for all nodes $P$,
\item $\e(M)\in \Z$ for all markings $M$.
\end{enumerate}
\end{lemma}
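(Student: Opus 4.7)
The plan is to prove the three implications $(1)\Rightarrow(2)\Rightarrow(3)\Rightarrow(1)$, all by straightforward bottom-up induction on the dag $(\GG,\DD)$. The key observation, used throughout, is the defining formula $\e(P)=2^{\e(\LL_P)}$, which ties the value of a node to the evaluation of the marking of its outgoing arcs.

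First I would do $(1)\Rightarrow(2)$: if $\e(P)\in 2^{\N}$ then by the formula $\e(\LL_P)=\log_2\e(P)\in\N\sse\N_{\ge 0}$. Next, $(2)\Rightarrow(1)$ by induction on the topological order. Leaves satisfy $\e(P)=2^0=1\in 2^{\N}$. For an internal node $P$, assume all targets $Q$ of outgoing arcs satisfy $\e(Q)\in 2^{\N}\sse\N$. Then $\e(\LL_P)=\sum_Q\LL_P(Q)\e(Q)$ is an integer; by hypothesis (2) it is non-negative, hence $\e(P)=2^{\e(\LL_P)}\in 2^{\N}$.

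The implication $(1)\Rightarrow(3)$ is immediate: each $\e(P)$ is a positive integer, so $\e(M)=\sum_PM(P)\e(P)$ is an integer for every marking $M$. For the remaining direction $(3)\Rightarrow(1)$ I would again induct along the dag. Leaves are fine. For an internal node $P$, induction gives $\e(Q)\in 2^{\N}$ for every target $Q$ of an arc leaving $P$, so $\e(\LL_P)\in\Z$ and therefore $\e(P)=2^{\e(\LL_P)}$ is a (possibly negative) integer power of $2$. Apply (3) to the singleton marking $M$ with $M(P)=+1$ and $M=0$ elsewhere: $\e(M)=\e(P)\in\Z$. Since $2^k\in\Z$ forces $k\ge 0$, we conclude $\e(\LL_P)\ge 0$ and hence $\e(P)\in 2^{\N}$, completing the induction.

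No serious obstacle is expected. The only point that needs a little care is that in each induction step one must first argue that $\e(\LL_P)$ is an integer (from the induction hypothesis on the strictly smaller successors) before comparing it with $0$; this is what makes the singleton-marking trick in $(3)\Rightarrow(1)$ work, since otherwise $2^{\e(\LL_P)}$ might a priori be an irrational real number rather than a non-integer rational power of two.
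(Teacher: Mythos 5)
Your proof is correct and follows essentially the same route as the paper's: a routine structural induction over the dag based on the identity $\e(P)=2^{\e(\LL_P)}$, integrality of sums, and the fact that $2^k\in\Z$ with $k\in\Z$ forces $k\ge 0$. The paper merely packages the induction differently (deleting a node without incoming arcs and invoking the equivalence on the smaller circuit), whereas you prove the individual implications by bottom-up induction, but the mathematical content is the same.
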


\begin{proof}
Choose some node $P$ without incoming arcs which exists 
because $(\GG,\del)$ defines a dag. The assertions are equivalent
on $\GG\sm\oneset{P}$ by induction. The result now follows easily. 
\end{proof}

\begin{proofof}{\refcor{PCtest}}{
The procedure \MT uses \ET which detects if $\e(P)\notin 2^\N$.
The result follows from \reflem{lem:integervalues}. 
}\end{proofof}

\subsection*{The \WP of the algebra $\sdpz$ with swapping}

\begin{proofof}{\refthm{wpsdpz}}{
The proof is almost the same as the proof of \refthm{wpbg} which has been 
given above. Hence we focus on the differences in the proof. The input is given 
by a sequence of pairwise disjoint \PC{}s each of them 
with a triple marking $[U,X,K]$ representing an element in $\sdpz$. We use 
only the  invariant that 
$U,X,K$ have pairwise disjoint supports. 
Swapping of $[u,x,k]$ is possible, if $z= u2^x \in \Z$ and the 
result is either $[x+k,z,0]$ or $[x+k,0,z]$ depending on the sign of $z$. 
In order to realize a marking for $z$ we clone $U$. This increases the 
size by $\abs{\sig(U)}$. At the end the size of the \PC is quadratic in the input. 
This yields $\Oh(n^4)$ time. 
}\end{proofof}

\subsection*{Dehn functions}\label{sec:dehn}
No result about Dehn functions is used in our paper. However, for convenience
of the interested reader we recall the definition of a Dehn function
as given by Wikipedia.  
Let $G$ be given by a finite generating set $X$ with a finite defining 
set of relations $R$. 
Let $F(X)$ be the free group with basis $X$ and let $w \in  F(X)$ be a relation in $G$, that is, a freely-reduced word such that 
$w = 1$ in $G$. Note that this is equivalent to saying that is, $w$ belongs to the normal closure of $R$ in $F(X)$.
Hence we can write $w$ as a sequence of $m$ words $xr x^{-1}$ 
with $r \in R^{\pm 1}$ and $x \in F(X)$.
The \ei{area} of $w$, denoted  $\mathrm{Area}(w)$, is the smallest $m\geq 0$
such that there exists such a representation  for $w$ as the product in $F(X)$ of $m$ conjugates of elements of $R^{\pm 1}$ .
Then the Dehn function of a finite presentation $G = \gen{X\mid R}$ is defined as
$$\mathrm{Dehn}(n) = \max\set{\mathrm{Area}(w)}{ w= 1 \in G, \; \abs w \leq n, \text{ and $w$ is freely-reduced}}.$$
Two different finite presentations of the same group are equivalent with respect
to \ei{domination}. Consequently, for a finitely presented group the growth type of its Dehn function does not depend on the choice of a finite presentation for that group. 

A function $f : \N \to \N$  is dominated by $g: \N \to \N$, if there exists 
$c \geq 1$  such that $f(n) \leq c g(cn +c ) + cn + c$ for all $n \in \N$. 

\end{document}